\DeclareMathOperator{\Mat}{\operatorname{M}}
\DeclareMathOperator{\GL}{\operatorname{GL}}
\DeclareMathOperator{\id}{\operatorname{id}}
\newcommand{\Ker}{\operatorname{Ker}}
\newcommand{\Vect}{\operatorname{Span}}
\newcommand{\im}{\operatorname{Im}}
\newcommand{\Sp}{\operatorname{Sp}}
\newcommand{\OSp}{\operatorname{OSp}}
\newcommand{\Span}{\operatorname{Span}}
\renewcommand{\setminus}{\smallsetminus}
\def\R{\mathbb{R}}
\def\C{\mathbb{C}}
\def\Q{\mathbb{Q}}
\def\N{\mathbb{N}}
\def\Z{\mathbb{Z}}
\def\U{\mathbb{U}}
\def\calP{\mathcal{P}}
\def\lcro{\mathopen{[\![}}
\def\rcro{\mathclose{]\!]}}
\theoremstyle{definition}
\newtheorem{Def}{Definition}
\newtheorem{Not}[Def]{Notation}
\theoremstyle{plain}
\newtheorem{theo}[Def]{Theorem}
\newtheorem{prop}[Def]{Proposition}
\newtheorem{lemme}[Def]{Lemma}
\theoremstyle{plain}
\theoremstyle{remark}
\title{On commuting matrices and exponentials}
\author{Cl\'ement de Seguins Pazzis\footnote{Lyc\'ee Priv\'e Sainte-Genevi\`eve, 2, rue
de l'\'Ecole des Postes, 78029 Versailles Cedex, FRANCE.}
\footnote{e-mail: dsp.prof@gmail.com}}
\begin{document}

\thispagestyle{plain}
\maketitle

\begin{abstract}
Let $A$ and $B$ be matrices of $\Mat_n(\C)$. We show that if $\exp(A)^k \exp(B)^l=\exp(kA+lB)$ for all integers $k$ and $l$,
then $AB=BA$.
We also show that if $\exp(A)^k \exp(B)=\exp(B)\exp(A)^k=\exp(kA+B)$ for every positive integer $k$, then the pair $(A,B)$
has property L of Motzkin and Taussky. \\
As a consequence, if $G$ is a subgroup of $(\Mat_n(\C),+)$ and $M \mapsto \exp(M)$ is a homomorphism from $G$ to $(\GL_n(\C),\times)$,
then $G$ consists of commuting matrices. If $S$ is a subsemigroup of $(\Mat_n(\C),+)$ and $M \mapsto \exp(M)$ is a homomorphism from $S$ to $(\GL_n(\C),\times)$, then the linear subspace $\Span(S)$ of $\Mat_n(\C)$ has property L of Motzkin and Taussky.
\end{abstract}

\vskip 2mm
\noindent
\emph{AMS Classification:} 15A16; 15A22

\vskip 2mm
\noindent
\emph{Keywords:} matrix pencils, commuting exponentials, property L.

\section{Introduction}

\subsection{Notation and definition}

\begin{enumerate}[i)]
\item We denote by $\N$ the set of non-negative integers.
\item If $M \in \Mat_n(\C)$, we denote by $e^M$ or $\exp(M)$ its exponential, by $\Sp(M)$ its set of eigenvalues.
\item The $n \times n$ complex matrices $A$, $B$ are said to be simultaneously triangularizable if there exists
an invertible matrix $P$ such that $P^{-1}AP$ and $P^{-1}BP$ are upper triangular.
\item A pair $(A,B)$ of complex $n \times n$ matrices is said to have property L if for a special ordering
$(\lambda_i)_{1 \leq i \leq n}$, $(\mu_i)_{1 \leq i \leq n}$ of the eigenvalues of $A$, $B$, the eigenvalues of $xA+yB$ are
$(x\lambda_i+y\mu_i)_{1 \leq i \leq n}$ for all values of the complex numbers $x$, $y$.
\end{enumerate}

\subsection{The problem}

It is well known that the exponential is not a group homomorphism from $(\Mat_n(\C),+)$
to $(\GL_n(\C),\times)$ if $n \geq 2$. Nevertheless,
when $A$ and $B$ are commuting matrices of $\Mat_n(\C)$, one has
\begin{equation}\label{basic}
e^{A+B}=e^Ae^B=e^Be^A.
\end{equation}
However \eqref{basic} is not a sufficient condition for the commutativity of $A$ with $B$, nor even for
$A$ and $B$ to be simultaneously triangularizable.
Still, if
\begin{equation}\label{local}
\forall t \in \R, \; e^{tA}e^{tB}=e^{tB}e^{tA},
\end{equation}
or
\begin{equation}\label{local2}
\forall t \in \R, \; e^{t(A+B)}=e^{tA}e^{tB},
\end{equation}
then a power series expansion at $t=0$ shows that $AB=BA$.
In the 1950s, pairs of matrices $(A,B)$ of small size such that $e^{A+B}=e^Ae^B$
have been under extensive scrutiny \cite{Frechet1,Frechet2,Huff,Kakar,MoNo1,MoNo2}.
More recently, Wermuth \cite{WerI,WerII} and Schmoeger \cite{SchmoegerI,SchmoegerII} studied the problem of adding extra conditions on the matrices $A$ and $B$ for
the commutativity of $e^A$ with $e^B$ to imply the commutativity of $A$ with $B$.
A few years ago, Bourgeois (see \cite{Bourgeois}) investigated, for small $n$,
the pairs $(A,B)\in \Mat_n(\C)^2$ that satisfy
\begin{equation}
\label{Bourgeoiscond}
\forall k \in \N, \quad e^{kA+B}=e^{kA}e^B=e^B e^{kA}.
\end{equation}
The main interest in this condition lies in the fact that, contrary to conditions \eqref{local} and \eqref{local2}, it is not possible to use
it to obtain information on $A$ and $B$ based only on the local behavior of the exponential around $0$.
Bourgeois showed that Condition \eqref{Bourgeoiscond} implies that $A$ and $B$ are simultaneously triangularizable if $n=2$, and produced a proof
that this also holds when $n=3$. This last result is however false, as the following counterexample (communicated to us by Jean-Louis Tu) shows:
consider the matrices
$$A_1:=2i\pi\begin{bmatrix}
1 & 0 & 0 \\
0 & 2 & 0 \\
0 & 0 & 0
\end{bmatrix} \quad \text{and} \quad
B_1:=2i\pi\begin{bmatrix}
2 & 1 & 1 \\
1 & 3 & -2 \\
1 & 1 & 0
\end{bmatrix}.$$
Notice that $A_1$ and $B_1$ are not simultaneously triangularizable since they
share no eigenvector (indeed, the eigenspaces of $A_1$ are the lines spanned
by the three vectors of the canonical basis, and none of them is stabilized by $B_1$).
However, for every $t \in \C$, a straightforward computation shows that the characteristic polynomial of
$tA_1+B_1$ is
$$X\bigl(X-2i\pi(t+2)\bigr)\bigl(X-2i\pi(2t+3)\bigr).$$
Then for every $t \in \N$, the matrix $tA_1+B_1$ has three distinct eigenvalues in $2i\pi \Z$,
hence is diagonalizable with $e^{tA_1+B_1}=I_3$. In particular $e^{B_1}=I_3$, and on the other hand
$e^{A_1}=I_3$. This shows that Condition \eqref{Bourgeoiscond} holds.

\vskip 2mm
It then appears that one should strengthen Bourgeois' condition as follows in order
to obtain at least the simultaneous triangularizability of $A$ and $B$:
\begin{equation}
\label{dSPcond}
\forall (k,l) \in \Z^2, \quad e^{kA+lB}=e^{kA}e^{lB}.
\end{equation}
Notice immediately that this condition implies that $e^A$ and $e^B$ commute.
Indeed, if Condition \eqref{dSPcond} holds, then
$$e^Be^A=\bigl(e^{-A}e^{-B}\bigr)^{-1}=\bigl(e^{-A-B}\bigr)^{-1}=e^{A+B}=e^A e^B.$$
Therefore Condition \eqref{dSPcond} is equivalent to
\begin{equation}
\label{dSPcond2}
\forall (k,l) \in \Z^2, \quad e^{kA+lB}=e^{kA}e^{lB}=e^{lB}e^{kA}.
\end{equation}

Here is our main result.
\begin{theo}\label{2matrices}
Let $(A,B)\in \Mat_n(\C)^2$ be such that, for all $(k,l)\in \Z^2$, $e^{kA+lB}=e^{kA}e^{lB}$.
Then $AB=BA$.
\end{theo}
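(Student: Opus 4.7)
The plan is an induction on $n$, with the base case $n=1$ trivial. First, as in the excerpt's derivation of \eqref{dSPcond2}, the hypothesis automatically upgrades to $e^{kA+lB}=e^{kA}e^{lB}=e^{lB}e^{kA}$ for every $(k,l)\in\Z^2$, so every $e^{kA}$ commutes with every $e^{lB}$.

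For the inductive step I would first aim at simultaneous triangularizability of $A$ and $B$, obtained from a common eigenvector. The generalized eigenspaces $E_\lambda^A:=\Ker\bigl((A-\lambda I)^n\bigr)$ of $A$ can be recovered as the intersection over sufficiently large $k\in\N$ of the generalized eigenspaces of $e^{kA}$ for the eigenvalue $e^{k\lambda}$ (since $e^{k\lambda}\neq e^{k\mu}$ for $\lambda\neq\mu$ in $\Sp(A)$ and all large enough $k$), which shows that every $E_\lambda^A$ is stable under each $e^{lB}$. The key difficulty is upgrading this to $B$-stability: $e^{lB}$-invariance does not in general imply $B$-invariance, and this subtlety is precisely what distinguishes Bourgeois' condition \eqref{Bourgeoiscond} (which admits the $n=3$ counterexample of the excerpt) from the stronger condition \eqref{dSPcond}. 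I expect the upgrade to come from exploiting the full two-parameter family $(k,l)\in\Z^2$ of identities restricted to $E_\lambda^A$: once restricted there, each side of $e^{kA+lB}=e^{kA}e^{lB}$ is an entire function of $(k,l)\in\C^2$ of controlled form (polynomial-times-exponential on one side, since $(A-\lambda I)|_{E_\lambda^A}$ is nilpotent), and a Carlson-type or direct-comparison argument should force $B$ to preserve $E_\lambda^A$. By the symmetric argument each $E_\mu^B$ is $A$-stable, and a common eigenvector is then found inside $E_\lambda^A\cap E_\mu^B$ for suitable $\lambda,\mu$.

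Once $A$ and $B$ are placed in common upper-triangular form with diagonals $(\lambda_i)_i$ and $(\mu_i)_i$, I would induct on the distance $d$ from the diagonal to show $[A,B]=0$ entrywise. Assuming vanishing for $d'<d$, direct expansion of the $(i,i+d)$-entry of $e^{kA+lB}-e^{kA}e^{lB}$ yields an identity of the shape $\sum_\alpha c_\alpha(k,l)\,e^{k\lambda_{r(\alpha)}+l\mu_{s(\alpha)}}=0$ valid on $\Z^2$, with $c_\alpha$ polynomial in $(k,l)$ and with $[A,B]_{i,i+d}$ appearing as a leading coefficient of some $c_\alpha$. When the exponents $(\lambda_{r(\alpha)},\mu_{s(\alpha)})$ are pairwise distinct modulo $(2\pi i\Z)^2$, the corresponding exponentials are linearly independent on $\Z^2$ and one reads off $[A,B]_{i,i+d}=0$.

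The main obstacle throughout is the resonance case, in which two distinct pairs of exponents happen to coincide modulo $(2\pi i\Z)^2$ and the exponential characters collide on $\Z^2$. Here the freedom to vary both $k$ and $l$ independently over all of $\Z$ is essential, since restricting to a one-parameter family as in \eqref{Bourgeoiscond} enables the $n=3$ counterexample exhibited in the excerpt. Untangling resonances should require combining the identities for many $(k,l)\in\Z^2$ to separate contributions whose exponential parts coincide only on the discrete set $\Z^2$ and differ on $\C^2$.
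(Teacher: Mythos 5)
Your outline correctly identifies where the difficulty lies (the resonance case), but it does not resolve it, and resolving it is essentially the entire content of the proof. Two concrete problems. First, your mechanism for making $E_\lambda^A$ stable under $B$ rests on a false premise: the intersection over large $k$ of the generalized eigenspaces of $e^{kA}$ for the eigenvalue $e^{k\lambda}$ is not $E_\lambda^A$ but $\bigoplus_{\mu} E_\mu^A$, the sum running over all $\mu\in\Sp(A)$ with $\lambda-\mu\in 2i\pi\Z$ (for such $\mu$ one has $e^{k\lambda}=e^{k\mu}$ for \emph{every} $k$, and for $\lambda-\mu\in 2i\pi\Q$ the collision recurs for infinitely many $k$). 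In the extreme case $\Sp(A)\subset 2i\pi\Z$ and $\Sp(B)\subset 2i\pi\Z$ --- which cannot be excluded, since it occurs for the matrices $A_1,B_1$ of the introduction --- every $e^{kA}$, $e^{lB}$ and $e^{kA+lB}$ has $1$ as sole eigenvalue, so the exponentials carry no spectral information about $A$ and $B$ whatsoever, and your proposed Carlson-type comparison on a generalized eigenspace has nothing to bite on. Your final step collapses there as well: all the characters $e^{k\lambda_r+l\mu_s}$ are identically $1$, linear independence of exponentials on $\Z^2$ gives nothing, and the identity degenerates into a polynomial relation from which $[A,B]_{i,i+d}$ cannot simply be read off.

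That totally resonant case is exactly where the paper does its real work, and nothing in your sketch substitutes for it. The paper handles it by: (i) a Puiseux-series analysis of the pencil $z\mapsto A+zB$ showing that integrality of the spectra of the $kA+B$ forces the eigenvalue branches to be affine (property L), via a discreteness-plus-asymptotics argument on the quantities $kf_1\bigl(k^{-1/N}\bigr)$; (ii) a refined Motzkin--Taussky theorem (holomorphy of eigenprojections plus Liouville) upgrading property L to commutativity when all the $kA+lB$ are diagonalizable; and (iii) a Jordan--Chevalley argument for the non-diagonalizable resonant case, exploiting the fact that a nilpotent $N$ is a polynomial in $e^N$ to obtain $NN'=N'N$ and to reduce the semisimple parts to case (ii), before reassembling $A+aB$ and $A+bB$ for two non-exceptional integers $a\neq b$. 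The non-resonant reductions you describe (splitting along characteristic subspaces of $e^A$ and $e^B$ when the map $(\lambda,\mu)\mapsto\lambda^k\mu$ can be made injective) do correspond to the paper's inductive step, but as written your two key assertions --- that an entire-function comparison ``should force'' $B$ to preserve $E_\lambda^A$, and that resonances ``should'' be untangled by combining identities over $\Z^2$ --- are precisely the gaps.
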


The following corollary is straightforward.

\begin{theo}\label{group}
Let $G$ be a subgroup of $(\Mat_n(\C),+)$ and assume that $M \mapsto \exp(M)$
is a homomorphism from $(G,+)$ to $(\GL_n(\C),\times)$.
Then, for all $(A,B)\in G^2$, $AB=BA$.
\end{theo}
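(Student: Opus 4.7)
The plan is to derive Theorem \ref{group} as an immediate consequence of Theorem \ref{2matrices}. I would fix two arbitrary elements $A$ and $B$ of $G$, verify that the pair $(A,B)$ satisfies the hypothesis of Theorem \ref{2matrices}, and then conclude that $AB = BA$. Since $A,B$ are arbitrary in $G$, the theorem follows.

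The first step is a purely group-theoretic observation: for every $(k,l) \in \Z^2$, the matrices $kA$, $lB$ and $kA+lB$ all belong to $G$. This is immediate because $G$, being a subgroup of the abelian group $(\Mat_n(\C),+)$, is automatically stable under integer scalar multiplication and under addition, hence under arbitrary $\Z$-linear combinations.

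The second step uses the homomorphism hypothesis. Since $kA$ and $lB$ both lie in $G$ and $\exp \colon (G,+) \to (\GL_n(\C),\times)$ is a morphism, one has
\[
e^{kA+lB} \;=\; e^{(kA)+(lB)} \;=\; e^{kA}\,e^{lB}
\]
for every $(k,l) \in \Z^2$. This is precisely the numerical condition required by Theorem \ref{2matrices}, so applying that theorem yields $AB = BA$.

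I expect no real obstacle: the argument is a mechanical translation of the subgroup axioms and the homomorphism property into the hypothesis of Theorem \ref{2matrices}. The only mildly delicate point is to remember to take integer multiples \emph{inside} $G$ before invoking the homomorphism property, which is why the preliminary observation that $kA,lB\in G$ for all $(k,l)\in\Z^2$ is needed.
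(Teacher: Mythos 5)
Your proposal is correct and is exactly the argument the paper intends: the paper simply declares Theorem \ref{group} a straightforward corollary of Theorem \ref{2matrices}, and your two steps (closure of $G$ under $\Z$-linear combinations, then the homomorphism property giving $e^{kA+lB}=e^{kA}e^{lB}$) fill in that routine deduction correctly.
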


The key of the proof of Theorem \ref{2matrices} is

\begin{prop}\label{spectredansZ}
Let $(A,B)\in \Mat_n(\C)^2$. Assume that, for every $(k,l)\in \Z^2$, the matrix
$kA+lB$ is diagonalizable and $\Sp(kA+lB) \subset \Z$. Then $AB=BA$.
\end{prop}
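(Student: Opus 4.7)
The plan is to establish that $(A,B)$ has property L with integer slopes and that every matrix in the complex pencil $\{tA+sB : (t,s) \in \C^2\}$ is diagonalizable, whereupon the classical theorem of Motzkin and Taussky on diagonalizable pencils yields simultaneous diagonalizability and hence $AB=BA$.

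To establish property L, I would first observe that $\chi_{tA+sB}(X) = \det(XI - tA - sB)$ is homogeneous of total degree $n$ in $(X,t,s)$ and monic in $X$, and its coefficients (as polynomials in $(t,s)$) take integer values on the Zariski-dense set $\Z^2 \subset \C^2$. Applying automorphisms of $\C/\Q$ shows any polynomial so behaving lies in $\Q[t,s]$, whence $\chi_{tA+sB} \in \Q[X,t,s]$. I would factor this as $\chi = \prod_j P_j^{e_j}$ into $\Q$-irreducibles monic in $X$. By Hilbert's irreducibility theorem, for a Zariski-dense subset of $(k,l) \in \Z^2$ each specialization $P_j(X,k,l) \in \Q[X]$ remains irreducible of the same $X$-degree; but $P_j(X,k,l)$ divides $\chi_{kA+lB}(X) \in \Z[X]$, which by hypothesis splits into integer linear factors, and any $\Q$-irreducible divisor of such a polynomial must itself be linear. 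Hence each $P_j$ has $X$-degree one; writing $P_j = X - q_j(t,s)$, the homogeneity of $\chi$ forces $q_j(t,s) = \lambda_j t + \mu_j s$ with $\lambda_j = q_j(1,0) \in \Sp(A) \subset \Z$ and $\mu_j = q_j(0,1) \in \Sp(B) \subset \Z$.

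To establish diagonalizability of the full pencil, fix $(t_0,s_0) \in \C^2$. If $(t_0,s_0)$ is proportional to a nonzero vector of $\Z^2$, then $t_0A+s_0B$ is a scalar multiple of a matrix diagonalizable by hypothesis, hence itself diagonalizable. Otherwise $s_0/t_0$ is irrational, and a short argument shows the eigenvalues $t_0\lambda_j + s_0\mu_j$ are then pairwise distinct for the distinct pairs $(\lambda_j,\mu_j)$, so the algebraic multiplicity of the $j$-th eigenvalue is exactly $e_j$. Setting $A'_j := A - \lambda_j I$ and $B'_j := B - \mu_j I$, the map $(t,s) \mapsto \dim \ker(tA'_j + sB'_j)$ is upper-semicontinuous in the Zariski topology and equals $e_j$ on the Zariski-dense subset of $(k,l) \in \Z^2$ lying off the finitely many rational lines of eigenvalue coincidence (since $kA+lB$ is then diagonalizable with an eigenspace of dimension $e_j$ at $k\lambda_j + l\mu_j$). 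Thus the Zariski-closed locus $\{\dim \ker(tA'_j + sB'_j) \ge e_j\}$ contains a Zariski-dense set and hence equals all of $\C^2$; combined with the upper bound $e_j$ from algebraic multiplicity, one gets equality at $(t_0,s_0)$ and diagonalizability there. Applying the Motzkin--Taussky theorem concludes the proof.

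I expect the main obstacle to be the first step: converting the purely arithmetic hypothesis ``integer spectrum on $\Z^2$'' into the structural conclusion that the eigenvalue functions of $tA+sB$ are globally linear in $(t,s)$. Hilbert's irreducibility theorem is the cleanest tool, but one could alternatively analyze the Puiseux expansion at infinity of each eigenvalue function, exploiting that integer values along $\Z$ force the fractional and negative-order Puiseux coefficients to vanish and thus directly yielding linearity.
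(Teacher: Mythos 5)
Your proposal is correct, but it reaches the conclusion by a genuinely different route from the paper in both of its halves. For property L, the paper works locally at $z=0$ with Puiseux expansions of the eigenvalue branches of $z\mapsto A+zB$ and uses the discreteness of $\Z$ to kill all non-affine terms, then globalizes by a polynomial-identity argument; you instead observe that $\chi_{tA+sB}$ has rational (indeed integer-valued) coefficients, factor it into $\Q$-irreducibles, and use Hilbert's irreducibility theorem plus the fact that each integer specialization splits into integer linear factors to force every irreducible factor to be linear in $X$, with homogeneity then giving linearity in $(t,s)$. This is valid (a single good specialization per factor suffices, and HIT guarantees integral ones exist), though it trades the paper's elementary asymptotic estimate for a heavyweight arithmetic tool; your closing remark correctly identifies the Puiseux alternative as the paper's actual method. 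For the second half, the hypothesis only gives diagonalizability at the rational points of the pencil, and the paper copes with this by proving a \emph{refined} Motzkin--Taussky theorem (following Kato's eigenprojection argument) that requires diagonalizability only at the exceptional points, which are rational because the slopes are integers. You instead propagate diagonalizability from the Zariski-dense set of good integer points to the entire complex pencil via lower semicontinuity of rank applied to $tA'_j+sB'_j$, and then invoke the classical Motzkin--Taussky theorem. That semicontinuity argument is sound (the locus $\{\rk(tA'_j+sB'_j)\le n-e_j\}$ is closed and contains a dense set, and at an irrational slope the eigenvalues $t_0\lambda_j+s_0\mu_j$ are pairwise distinct so geometric and algebraic multiplicities match), and it is an attractive elementary substitute for re-examining Kato's proof; what it costs is the refined statement itself, which the paper presumably records for its own interest.
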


For subsemigroups of $(\Mat_n(\C),+)$, Theorem \ref{group} surely fails.
A very simple counterexample is indeed given by the semigroup generated by
$$A:=\begin{bmatrix}
0 & 0 \\
0 & 2i\pi
\end{bmatrix} \quad \text{and} \quad B:=\begin{bmatrix}
0 & 1 \\
0 & 2i\pi
\end{bmatrix}.$$
One may however wonder whether a subsemigroup $S$ on which the exponential is a homomorphism must be simultaneously
triangularizable. Obviously the additive semigroup generated by the matrices $A_1$ and $B_1$ above is a counterexample.
Nevertheless, we will prove a weaker result, which rectifies and generalizes Bourgeois' results \cite{Bourgeois}.

\begin{prop}\label{Bourgeoistheo}
Let $(A,B) \in \Mat_n(\C)^2$ be such that $\forall k \in \N, \; e^{kA+B}=e^{kA}e^B=e^Be^{kA}$.
Then $(A,B)$ has property~L.
\end{prop}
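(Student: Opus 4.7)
The plan is to show that $\chi_k(X) = \det(XI - kA - B)$ factors in $\C[k, X]$ into linear factors $X - k\lambda_i - c_i$, from which property L follows by a scaling argument. Since $e^{kA}$ commutes with $e^B$ for every $k \in \N$, $e^A$ and $e^B$ themselves commute, so I can simultaneously triangularize them via some $P \in \GL_n(\C)$, with diagonals $(e^{\lambda_i})_i$ and $(e^{\mu_i})_i$ where $(\lambda_i)$, $(\mu_i)$ are orderings of $\Sp(A)$, $\Sp(B)$ with algebraic multiplicity. The hypothesis $e^{kA+B} = e^{kA}e^B$ then forces $P^{-1}e^{kA+B}P$ to be upper triangular with diagonal $(e^{k\lambda_i+\mu_i})_i$, and combining with the standard identity $\det(XI - e^M) = \prod_{\nu \in \Sp(M)}(X - e^\nu)$ (algebraic multiplicities included) yields, for every $k \in \N$, the multiset equality
$$\Sp(kA+B) = \bigl\{k\lambda_i + \mu_i + 2i\pi\, m_i^{(k)} : 1 \le i \le n\bigr\}$$
for some integers $m_i^{(k)} \in \Z$.

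\textbf{Step 2 (linearity of the spectral curves).} Factor $\chi_k = \prod_\beta F_\beta^{e_\beta}$ into $\C[k, X]$-irreducibles. The key claim is that every $F_\beta$ has degree $1$ in $X$. Supposing $F_\beta$ of degree $d \ge 2$ in $X$, its roots in $\overline{\C(k)}$ are Galois conjugates $\phi^{(l)}(k)$ with Puiseux expansions at $k = \infty$ of the form $k\lambda + \sum_{j \ge 1} b_j \zeta^{-lj} k^{1-j/d}$, where $\zeta = e^{2i\pi/d}$ and $\lambda \in \Sp(A)$. By pigeonhole, on an infinite subset of $\N$ each branch $\phi^{(l)}$ matches a fixed pair $k\lambda_{j_l} + \mu_{l_l} + 2i\pi\Z$; since this shift is purely imaginary, the real part of the asymptotic Puiseux expansion must agree to all orders with the linear function $k\operatorname{Re}(\lambda_{j_l}) + \operatorname{Re}(\mu_{l_l})$, forcing $\operatorname{Re}(b_j \zeta^{-lj}) = 0$ for all $l$ and every $j \in \{1, \ldots, d-1\}$. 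A short Galois-cyclic argument then shows that all $b_j$ with $j < d$ vanish, except possibly $b_{d/2}$ when $d$ is even, which must be purely imaginary. When $d \ge 3$, there are always two distinct branches sharing the leading Puiseux data, whose difference is therefore $o(1)$; but this difference also lies in the discrete set $\{\mu_a - \mu_b + 2i\pi\Z\}$ for every $k \in \N$, so it must vanish on an infinite subset of $\N$, hence identically as an algebraic function, contradicting separability of $F_\beta$. The sole leftover subcase is $d = 2$ with $b_1 \neq 0$ purely imaginary, where I would invoke Weyl's equidistribution of $\operatorname{Im}(b_1)\sqrt{k}/\pi$ modulo $1$: the integer-$m_i^{(k)}$ constraint would trap these values in an $O(k^{-1/2})$-neighborhood of a finite subset of $\R/\Z$ for \emph{every} $k \in \N$, violating positive-density equidistribution.

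\textbf{Step 3 (property L).} Each $F_\beta$ being linear in $X$ and of degree at most $1$ in $k$ (since $\|kA+B\|$ bounds $|X|$ for $X \in \Sp(kA+B)$), we obtain $\chi_k(X) = \prod_i (X - k\lambda'_i - c_i)$ as a polynomial identity in $\C[k, X]$, for some ordering $(\lambda'_i)$ of $\Sp(A)$ and constants $c_i \in \C$. Evaluating at $k = 0$ shows $(c_i)$ is a permutation $(\mu_{\tau(i)})$ of $\Sp(B)$. For any $(x, y) \in \C^2$ with $y \neq 0$, $\det(XI - xA - yB) = y^n \chi_{x/y}(X/y) = \prod_i (X - x\lambda'_i - y\mu_{\tau(i)})$, which extends to $y = 0$ by polynomial identity; hence $\Sp(xA + yB) = \{x\lambda'_i + y\mu_{\tau(i)}\}_i$ for all $(x, y) \in \C^2$, which is property L. The main obstacle in the whole argument is the Puiseux analysis in Step 2, and specifically the $d = 2$ subcase where the Galois-cyclic cancellation is insufficient and an equidistribution argument for $\sqrt{k}$ modulo $1$ is needed.
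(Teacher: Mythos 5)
Your Step 1 is correct and is an efficient way to extract spectral information, but note what it actually gives: for each $k$, every eigenvalue of $kA+B$ lies in $\bigcup_i\{k\lambda_i+\mu_i+2i\pi\Z\}$, a union of translates of $2i\pi\Z$ by amounts that \emph{depend on $k$}. This $k$-dependence is precisely what breaks Step 2 in both hard subcases. For $d\ge 3$ you assert that the $o(1)$ difference $\delta(k)$ of two branches lies in the fixed discrete set $\{\mu_a-\mu_b+2i\pi\Z\}$; in fact, after pigeonholing the matching, $\delta(k)\in k(\lambda_a-\lambda_b)+(\mu_a-\mu_b)+2i\pi\Z$ with possibly $\lambda_a\neq\lambda_b$. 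Real parts only give $\operatorname{Re}(\lambda_a)=\operatorname{Re}(\lambda_b)$, and if $\lambda_a-\lambda_b=2i\pi\alpha$ with $\alpha$ irrational, then ``$\delta(k)\to 0$ and $\delta(k)\in 2i\pi(k\alpha+c+\Z)$ for infinitely many $k$'' does not force $\delta$ to vanish anywhere: it only says that the distance from $k\alpha+c$ to $\Z$ tends to $0$ along that subsequence, which is entirely possible. So this subcase also needs an equidistribution/positive-density argument, not mere discreteness. For $d=2$, the finite subset of $\R/\Z$ you want to trap $\operatorname{Im}(b_1)\sqrt{k}/(2\pi)$ near is again $k$-dependent (it is $\{k\operatorname{Im}(\lambda_a-\lambda)/(2\pi)+c_a\bmod 1\}_a$), so you actually need equidistribution of $\beta\sqrt{k}+\theta k$ modulo $1$ plus a density count --- and you flag this yourself as unproven. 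A smaller slip: an irreducible factor of $\chi_k$ in $\C[k,X]$ need not form a single Puiseux cycle at $k=\infty$ (consider $X^2-(k^2+1)$), so its $d$ roots need not share the leading coefficient $\lambda$ nor be indexed by $\zeta^{-lj}$; the argument must be run cycle by cycle. As written, Step 2 is a genuine gap.

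The paper circumvents all of this by an algebraic reduction that makes the target set a \emph{fixed} copy of $2i\pi\Z$ before any Puiseux analysis. It inducts on $n$: the decomposable case passes to blocks; in the indecomposable case one rescales $A$ by an integer so that $(\lambda,\mu)\mapsto\lambda\mu$ is injective on $\Sp(e^A)\times\Sp(e^B)$ (Lemma \ref{ultimlemma1}), shows that $A$ and $B$ then stabilize the characteristic subspaces of $e^A$ and $e^B$ (Lemma \ref{ultimlemma2}), and concludes from indecomposability that $e^A$ and $e^B$ each have a single eigenvalue; after subtracting scalars, $\Sp(kA'+B')\subset 2i\pi\Z$ for all $k\in\N$, and the Puiseux--discreteness argument (Proposition \ref{BourgeoisimpliqueLrestr}, essentially your Step 2 with a fixed discrete target) is then airtight. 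To salvage your more direct route you would have to genuinely prove the equidistribution statements above; otherwise, insert a reduction of this kind before Step 2.
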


Note that the converse is obviously false.

\vskip 3mm
The proofs of Theorem \ref{2matrices} and of Proposition \ref{Bourgeoistheo}
have largely similar parts, so they will be tackled simultaneously.
There are three main steps.
\begin{itemize}
\item We will prove Proposition \ref{Bourgeoistheo} in the special case where
$\Sp(A) \subset 2i\pi \Z$ and $\Sp(B) \subset 2i\pi \Z$. This will involve a study of the
matrix pencil $z \mapsto A+zB$. We will then easily derive Proposition \ref{spectredansZ}
using a refinement of the Motzkin-Taussky theorem.
\item We will handle the more general case $\Sp(A) \subset 2i\pi\Z$ and $\Sp(B) \subset 2i\pi \Z$ in Theorem \ref{2matrices}
by using the Jordan-Chevalley decompositions of $A$ and $B$ together with Proposition \ref{spectredansZ}.
\item In the general case, we will use an induction to reduce the situation to the previous one, both for Theorem \ref{2matrices}
and Proposition \ref{Bourgeoistheo}.
\end{itemize}
In the last section, we will prove a sort a generalized version of Proposition \ref{Bourgeoistheo} for additive semigroups of matrices
(see Theorem \ref{semigroup}).

\section{Additive groups and semigroups of matrices with an integral spectrum}

\subsection{Notation}

\begin{enumerate}[i)]
\item We denote by $\Sigma_n$ the group of permutations of $\{1,\dots,n\}$, make
it act on $\C^n$ by $\sigma.(z_1,\dots,z_n):=(z_{\sigma(1)},\dots,z_{\sigma(n)})$,
and consider the quotient set $\C^n/\Sigma_n$. The class of a list $(z_1,\dots,z_n)\in \C^n$
in $\C^n/\Sigma_n$ will be denoted by $[z_1,\dots,z_n]$.
\item For $M \in \Mat_n(\C)$, we denote by $\chi_M(X) \in \C[X]$ its characteristic polynomial, and we set
$$\OSp(M):=[z_1,\dots,z_n], \quad \text{where $\chi_M(X)=\prod_{k=1}^n (X-z_k)$.}$$
\item Given an integer $N \geq 1$, we set $\U_N(z):=\bigl\{\zeta \in \C : \; \zeta^N=z\bigr\}$.
\end{enumerate}

\subsection{Definition}

\begin{Def}[A reformulation of Motzkin-Taussky Property L \cite{MoTau}] ${}$ \\
A pair $(A,B) \in \Mat_n(\C)^2$ has property L when there are $n$ linear forms $f_1,\dots,f_n$ on $\C^2$ such that
$$\forall (x,y)\in \C^2, \; \OSp(xA+yB)=\bigl[f_k(x,y)\bigr]_{1 \leq k \leq n}.$$
Using the fact that the eigenvalues are continuous functions of the coefficients, it is obvious that
a pair $(A,B) \in \Mat_n(\C)^2$ has property L if and only if
there are affine maps $f_1,\dots,f_n$ from $\C$ to $\C$ such that
$$\forall z \in \C, \; \OSp(A+zB)=\bigl[f_k(z)\bigr]_{1\leq k \leq n}.$$
\end{Def}

\subsection{Property L for pairs of matrices with an integral spectrum}

We denote by $\mathcal{K}(\C)$ the quotient field of the integral domain $H(\C)$ of entire functions (i.e.\ analytic functions from $\C$ to $\C$).
Considering $\id_\C$ as an element of $\mathcal{K}(\C)$, we may view $A+\id_\C B$ as a matrix of $\Mat_n\bigl(\mathcal{K}(\C)\bigr)$.
We define the \textbf{generic number} $p$ of eigenvalues of the pencil $z \mapsto A+zB$
as the number of the distinct eigenvalues of $A+\id_\C B$ in an algebraic closure of $\mathcal{K}(\C)$.
A complex number $z$ is called \textbf{regular} when $A+zB$ has exactly $p$ distinct eigenvalues, and
\textbf{exceptional} otherwise.
In a neighborhood of $0$, the spectrum of $A+zB$ may be classically described with Puiseux series as follows (see \cite[chapter 7]{Fischer}):
there exists a radius $r>0$, an integer $q \in \{1,\dots,n\}$, positive integers
$d_1,\dots,d_q$ such that $n=d_1+\dots+d_q$, and analytic functions $f_1,\dots,f_q$ defined on a neighborhood of $0$
such that
$$\forall z \in \C \setminus \{0\}, \quad |z|<r \,\Rightarrow\, \chi_{A+zB}(X)=\underset{k=1}{\overset{q}{\prod}}\,\underset{\zeta \in \U_{d_k}(z)}{\prod}
\bigl(X-f_k(\zeta)\bigr).$$
\vskip 2mm
\noindent We may now prove the following result.
\begin{prop}\label{BourgeoisimpliqueLrestr}
Let $(A,B)\in \Mat_n(\C)^2$. Assume that $\Sp(kA+B) \subset \Z$ for every $k \in \N$.
Then $(A,B)$ has property L.
\end{prop}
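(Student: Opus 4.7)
The plan is to analyze each Puiseux branch of the pencil $z \mapsto A+zB$ at $z=0$, transporting the integrality hypothesis via the identity $A + (1/k)B = (kA+B)/k$: for every $k \in \N \setminus \{0\}$, the eigenvalues of $A+(1/k)B$ lie in $(1/k)\Z$. Fix a branch of index $j$ with ramification $d := d_j$ and set $\omega := e^{2i\pi/d}$; expanding $f_j(\zeta) = \sum_{s \geq 0} a_s \zeta^s$, for every sufficiently large $k \in \N$ and every $r \in \{0,\dots,d-1\}$ the quantity
\[
E_r(k) \;:=\; k\, f_j\!\bigl(\omega^r k^{-1/d}\bigr) \;=\; \sum_{s \geq 0} a_s\, \omega^{rs}\, k^{1 - s/d}
\]
is one of the eigenvalues of $kA+B$, hence an integer.

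The first step exploits the realness of each $E_r(k)$. Since the exponents $1-s/d$ are pairwise distinct, comparing the Puiseux expansions of $E_r(k)$ and of $\overline{E_r(k)}$ forces $a_s\,\omega^{rs} = \bar a_s\,\omega^{-rs}$ for every pair $(r,s)$; choosing $r=0$ gives $a_s \in \R$, after which the equation for $r=1$ reduces to $a_s(\omega^s-\omega^{-s}) = 0$, so $a_s = 0$ whenever $d \nmid 2s$. When $d$ is odd this already restricts the nonzero coefficients to the $a_{ld}$ with $l \geq 0$.

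The second step is a discrete Fourier argument restricted to the indices $m$ for which $\omega^{-mr}$ is real, namely $m=0$ and (only when $d$ is even) $m = d/2$. The corresponding sums
\[
L_m(k) \;:=\; \sum_{r=0}^{d-1} \omega^{-mr} E_r(k) \;=\; d \sum_{s \equiv m \pmod{d}} a_s\, k^{1 - s/d}
\]
lie in $\Z$. For $m=0$, iterated finite differences of $L_0(k) = d a_0 k + d a_d + d a_{2d}/k + \cdots$ combined with the discreteness of $\Z$ in $\R$ force $d a_0,\, d a_d \in \Z$ and then $a_{2d} = a_{3d} = \cdots = 0$. When $d$ is even, the same technique applied to $L_{d/2}(k) = d a_{d/2}\sqrt{k} + d a_{3d/2}/\sqrt{k} + \cdots$ shows that its first forward difference tends to zero, so $L_{d/2}$ is eventually constant in $\Z$; this is incompatible with the unbounded leading term unless $a_{d/2}=0$, after which discreteness forces $a_{3d/2}, a_{5d/2}, \ldots$ to vanish as well.

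Combining both steps leaves only $a_0$ and $a_d$ possibly nonzero, so $f_j(\zeta) = a_0 + a_d \zeta^d = a_0 + a_d z$: every branch contributes the affine eigenvalue $a_0 + a_d z$ with multiplicity $d_j$, which is property L. The main delicate point is the chained finite-difference-plus-discreteness argument of the second step (identifying the affine data, then propagating the vanishing through the whole Puiseux tail) together with the separate treatment of the mid-frequency $m = d/2$ in even degree; the realness analysis of the first step and the formal manipulations of the Puiseux decomposition supplied in the preamble are routine.
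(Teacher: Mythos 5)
Your proof is correct and rests on the same engine as the paper's: integrality of $k f_j(\zeta)$ for $\zeta^d=1/k$ coming from the Puiseux expansion at $z=0$, followed by finite differences combined with the discreteness of $\Z$ to kill every non-affine coefficient; your realness-plus-discrete-Fourier preprocessing over the $d$ conjugate determinations is a valid but dispensable elaboration, since the paper's single-branch asymptotic $a_s\bigl(1-\tfrac{s}{N}\bigr)k^{-s/N}$ for the first offending coefficient already eliminates all exponents in one pass. The only point you gloss over is the extension of the resulting affine factorization of $\chi_{A+zB}$ from a neighborhood of $0$ to all of $\C$ (which the paper's definition of property L requires); this is the routine polynomial-identity step that closes the paper's proof.
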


\begin{proof}
With the above notation, we prove that $f_1,\dots,f_q$ are polynomial functions. For instance, consider $f_1$
and its power series expansion
$$f_1(z)=\underset{j=0}{\overset{+\infty}{\sum}} a_j z^j.$$
Set $N:=d_1$ for convenience.
Let $k_0$ be a positive integer such that $\frac{1}{k_0}<r$.
For every integer $k \geq k_0$, $kf_1\bigl(k^{-\frac{1}{N}}\bigr)$ is an eigenvalue of $kA+B$: hence it is an integer.
Therefore one has: for every integer $k \geq k_0$,
$$(k+1)f_1\Bigl((k+1)^{-\frac{1}{N}}\Bigr)-kf_1\Bigl(k^{-\frac{1}{N}}\Bigr) \in \Z.$$
For every integer $k \geq k_0$, the following equality holds:
$$(k+1)f_1\Bigl((k+1)^{-\frac{1}{N}}\Bigr)-kf_1\Bigl(k^{-\frac{1}{N}}\Bigr)=a_0+
\underset{j \in \N \setminus \{0,N\}}{\sum}a_j\bigl((k+1)^{1-\frac{j}{N}}-k^{1-\frac{j}{N}}\bigr).$$
Assume that $a_j \neq 0$ for some $j \geq 1$ with $j \neq N$, and define $s$ as the smallest such $j$.
On the one hand, one has for every integer $j \in \N$,
$$(k+1)^{1-\frac{j}{N}}-k^{1-\frac{j}{N}}=k^{1-\frac{j}{N}}\Bigl(\Bigl(1+\frac{1}{k}\Bigr)^{1-\frac{j}{N}}-1\Bigr)
\underset{k \rightarrow +\infty}{\sim} k^{1-\frac{j}{N}}\,\frac{1-\frac{j}{N}}{k}=\Bigl(1-\frac{j}{N}\Bigr)\, k^{-\frac{j}{N}}.$$
On the other hand, when $k \rightarrow +\infty$, one has
\begin{eqnarray*}
\underset{j=s+N+1}{\overset{+\infty}{\sum}}a_j k^{1-\frac{j}{N}} & = & o\Bigl(k^{-\frac{s}{N}}\Bigr) \quad \text{and} \\
\underset{j=s+N+1}{\overset{+\infty}{\sum}}a_j (k+1)^{1-\frac{j}{N}} & = & o\Bigl(k^{-\frac{s}{N}}\Bigr).
\end{eqnarray*}
It follows that
$$\underset{j \in \N \setminus \{0,N\}}{\sum}a_j\bigl((k+1)^{1-\frac{j}{N}}-k^{1-\frac{j}{N}}\bigr)
\underset{k \rightarrow +\infty}{\sim} a_s \Bigl(1-\frac{s}{N}\Bigr)\, k^{-\frac{s}{N}}.$$
The sequence $\Bigl((k+1)f_1\bigl((k+1)^{-\frac{1}{N}}\bigr)-kf_1\bigl(k^{-\frac{1}{N}}\bigr)-a_0\Bigr)_{k \geq k_0}$ is discrete,
converges to $0$ and is not ultimately zero.
This is a contradiction. Therefore $\forall j \in \N \setminus \{0,N\}, \; a_j=0$.
In the same way, one shows that, for every $k \in \{1,\dots,q\}$, there exists a $b_k \in \C$ such that
$f_k(z)=f_k(0)+b_k z^{d_k}$ in a neighborhood of $0$.
It follows that, in a neighborhood of $0$,
$$\chi_{A+zB}(X)=\underset{k=1}{\overset{q}{\prod}}(X-f_k(0)-b_kz)^{d_k}.$$
Therefore we found affine maps $g_1,\dots,g_n$ from $\C$ to $\C$ such that, in a neighborhood of $0$,
$$\chi_{A+zB}(X)=\underset{k=1}{\overset{n}{\prod}}\bigl(X-g_k(z)\bigr).$$
The coefficients of these polynomials are polynomial functions of $z$
that coincide on a neighborhood of $0$; therefore
$$\forall z \in \C, \; \chi_{A+zB}(X)=\underset{k=1}{\overset{n}{\prod}} \bigl(X-g_k(z)\bigr).$$
The pair $(A,B)$ has property L, and Proposition \ref{BourgeoisimpliqueLrestr} is proven.
\end{proof}

\subsection{Commutativity for subgroups of diagonalizable matrices with an integral spectrum}

Given a matrix $M \in \Mat_n(\C)$ and an eigenvalue $\lambda$ of it,
recall that the \textbf{eigenprojection} of $M$ associated to $\lambda$ is the projection onto $\Ker(M-\lambda\,I_n)^n$ alongside
$\im(M-\lambda\,I_n)^n=\underset{\mu \in \Sp(M),\,\mu \neq \lambda}{\sum} \Ker(M-\mu\,I_n)^n$.

\vskip 2mm
Here, we derive Proposition \ref{spectredansZ} from Proposition \ref{BourgeoisimpliqueLrestr}.
We start by explaining how Kato's proof \cite[p.85 Theorem 2.6]{Kato} of the Motzkin-Taussky theorem \cite{MoTauII}
leads to the following refinement.

\begin{theo}[Refined Motzkin-Taussky theorem]\label{refinedMoTau}
Let $(A,B)\in \Mat_n(\C)^2$ be a pair of matrices which satisfies property L.
Assume that $B$ is diagonalizable and that $A+z_0B$ is diagonalizable for every exceptional point $z_0$ of the matrix pencil $z \mapsto A+zB$.
Then $AB=BA$.
\end{theo}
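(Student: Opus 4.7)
The plan is to follow Kato's strategy \cite{Kato}: prove that the Riesz spectral projections of the pencil $z\mapsto A+zB$ are in fact constant, and then extract commutativity from this. Property L supplies $p$ distinct affine functions $f_1,\ldots,f_p$, each of the form $f_k(z)=\lambda_k+z\mu_k$ with multiplicity $m_k$, listing the eigenvalues of $A+zB$ at any regular point $z$. For each $k\in\{1,\ldots,p\}$, define the eigenprojection
$$P_k(z) := \frac{1}{2i\pi}\oint_{\Gamma_k(z)}\bigl(\zeta\,I_n-(A+zB)\bigr)^{-1}\,d\zeta,$$
where $\Gamma_k(z)$ is a small positively oriented loop enclosing $f_k(z)$ and none of the other eigenvalues. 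This is the projection onto the generalized eigenspace of $A+zB$ attached to $f_k(z)$, and $z\mapsto P_k(z)$ is holomorphic on the open set of regular points.

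The crucial step is to extend each $P_k$ to a bounded entire function on $\C$. At an exceptional point $z_0$, the Laurent expansion of $P_k$ around $z_0$ may a priori exhibit negative-degree terms; Kato's analysis expresses these terms via the nilpotent part of $A+z_0B$ on the combined generalized eigenspace of the merging branches, so the assumption that $A+z_0B$ is diagonalizable kills them, and $P_k$ extends holomorphically to $z_0$. To handle the point at infinity, set $w=1/z$: from $A+zB=z(wA+B)$ the generalized eigenspaces of $A+zB$ for $f_k(z)$ coincide with those of $wA+B$ for $\mu_k+w\lambda_k$, so $P_k(z)$ equals the eigenprojection of $wA+B$ at $\mu_k+w\lambda_k$, and applying the same argument at $w=0$, where the pencil takes the diagonalizable value $B$, shows that $P_k(z)$ has a finite limit as $z\to\infty$. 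Being entire and bounded, $P_k$ is constant by Liouville's theorem.

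To conclude, observe that the constant projection $P_k$ commutes with $A+zB$ at every regular $z$, hence with both $A$ and $B$ separately, so $E_k:=\im P_k$ is invariant under both matrices. The characteristic polynomial of $(A+zB)|_{E_k}$ equals $(X-\lambda_k-z\mu_k)^{m_k}$; letting $z\to\infty$ shows that $\mu_k$ is the only eigenvalue of $B|_{E_k}$, and since $B$ is diagonalizable and $E_k$ is $B$-invariant, the restriction $B|_{E_k}$ is itself diagonalizable, whence $B|_{E_k}=\mu_k\,I_{E_k}$ is scalar and commutes with $A|_{E_k}$. The identity $\sum_k P_k=I_n$ gives the decomposition $\C^n=\bigoplus_k E_k$, and adding up the block-wise commutations yields $AB=BA$.

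The main obstacle is clearly the analytic extension of $P_k$ at an exceptional point; this requires a careful reading of Kato's Laurent expansion of a spectral projection to verify that its negative-degree terms are controlled by the Jordan structure of the matrix at the centre of the expansion, so that diagonalizability there kills them. Everything else is either standard linear algebra or follows directly from property L.
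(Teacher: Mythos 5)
Your proposal is correct and follows essentially the same route as the paper's proof, which likewise invokes Kato's argument: holomorphy of the eigenprojections on the regular set, removability of the singularities at exceptional points from the diagonalizability of $A+z_0B$ (Kato, Theorem 2.3, p.~82), boundedness at infinity via $w=1/z$ and the diagonalizability of $B$, and Liouville. The only cosmetic difference is the endgame, where you show that $B$ acts as the scalar $\mu_k$ on each $\im P_k$ instead of writing $B$ as a linear combination of the constant projections; the two formulations are equivalent.
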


\begin{proof}
We refer to the line of reasoning of \cite[p.85 Theorem 2.6]{Kato} and explain how it may be adapted to prove Theorem \ref{refinedMoTau}. Denote by $p$ the generic number of eigenvalues of $z \mapsto A+zB$,
and by $f_1,\dots,f_p$ the $p$ distinct affine maps such that $\forall z \in \C, \; \Sp(A+zB)=\{f_1(z),\dots,f_p(z)\}$.
Denote by $\Omega$ the (open) set of regular points of $z \mapsto A+zB$, i.e.\
$$\Omega=\C \setminus \bigl\{z \in \C : \; \exists (i,j)\in \{1,\dots,p\}^2 : \; i \neq j\; \text{and} \; f_i(z)=f_j(z)\bigr\}.$$
For $z \in \Omega$ and $i \in \{1,\dots,p\}$, denote by $\Pi_i(z)$ the eigenprojection of $A+zB$ associated to the eigenvalue
$f_i(z)$. Then  $z \mapsto \Pi_i(z)$ is holomorphic on $\Omega$ for any $i \in \{1,\dots,p\}$ (see \cite[II.1.4]{Kato}).
Let $z_0 \in \C \setminus \Omega$. Then $A+z_0B$ is diagonalizable and hence
\cite[p.82, Theorem 2.3]{Kato} shows that $z_0$ is a regular point for each map $z \mapsto \Pi_i(z)$.
We deduce that the functions $(\Pi_i)_{i \leq p}$ are restrictions of entire functions.
Since $B$ is diagonalizable, these functions are bounded at infinity (see the last paragraph of \cite[p.85]{Kato})
and Liouville's theorem yields that they are constant.
By a classical continuity argument (see \cite[II.1.4, formula (1.16)]{Kato}), we deduce that each eigenprojection
of $B$ is sum of some projections chosen among the $\bigl(\Pi_i(0)\bigr)_{i \leq p}$.
As $B$ is diagonalizable, it is a linear combination of the $\bigl(\Pi_i(0)\bigr)_{i \leq p}$, which all commute with $A+zB$
for any regular $z$. Therefore $AB=BA$.
\end{proof}

We now turn to the proof of Proposition \ref{spectredansZ}.

\begin{proof}[Proof of Proposition \ref{spectredansZ}]
Let $(A,B)\in \Mat_n(\C)^2$. Assume that, for every $(k,l)\in \Z^2$, the matrix
$kA+lB$ is diagonalizable and $\Sp(kA+lB) \subset \Z$.
Proposition \ref{BourgeoisimpliqueLrestr} then shows that $(A,B)$ has property L.
For $k \in \lcro 1,n\rcro$, choose $f_k : (y,z)  \mapsto \alpha_k y+\beta_k z$ such that
$$\forall (y,z) \in \C^2, \; \OSp(yA+zB)=\bigl[f_k(y,z)\bigr]_{1 \leq k \leq n}.$$
Since $\Sp(A)=\{\alpha_1,\dots,\alpha_n\}$ and $\Sp(B)=\{\beta_1,\dots,\beta_n\}$,
the families $(\alpha_k)_{k \leq n}$ and $(\beta_k)_{k \leq n}$ are made of integers. It follows that
the exceptional points of the matrix pencil $z \mapsto A+zB$ are rational numbers.
As the matrix $A+\frac{l}{k}\,B=\frac{1}{k}\,(k\,A+l\,B)$ is diagonalizable for every $(k,l)\in (\Z \setminus \{0\}) \times \Z$,
the refined Motzkin-Taussky theorem implies that $AB=BA$.
\end{proof}

We now deduce the following special case of Theorem \ref{2matrices}.

\begin{lemme}\label{lemmecasnoyau}
Let $(A,B) \in \Mat_n(\C)^2$ be such that for all $(k,l)\in \Z^2$,
$e^{kA+lB}=I_n$. Then $AB=BA$.
\end{lemme}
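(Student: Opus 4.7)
The plan is to reduce the lemma directly to Proposition \ref{spectredansZ} by a simple rescaling, once I have verified the diagonalizability and spectral hypotheses.

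First I would check that for any $M \in \Mat_n(\C)$, $e^M = I_n$ forces $M$ to be diagonalizable with $\Sp(M)\subset 2i\pi \Z$. The spectral inclusion is immediate since $\Sp(e^M) = \{e^\lambda : \lambda \in \Sp(M)\}$. For diagonalizability, I would argue via the Jordan--Chevalley decomposition $M = D + N$ with $DN = ND$, $D$ diagonalizable and $N$ nilpotent: then $e^M = e^D e^N$ with $e^D$ diagonalizable and $e^N = I_n + N + \tfrac{1}{2}N^2 + \cdots$ unipotent, so the equality $e^M = I_n$ combined with the uniqueness of the Jordan--Chevalley decomposition of $I_n$ forces $e^N = I_n$, hence $N = 0$. (Alternatively, one sees at once that if $N \neq 0$, the unipotent factor $e^N$ is non-trivial, so $e^M$ cannot be diagonalizable.)

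Applying this to the hypothesis, I obtain that for every $(k,l)\in \Z^2$ the matrix $kA + lB$ is diagonalizable with $\Sp(kA+lB) \subset 2i\pi\,\Z$. The only gap between this statement and what is needed to invoke Proposition \ref{spectredansZ} is the $2i\pi$ factor, which I would clear by the obvious rescaling: setting $A' := \frac{1}{2i\pi}\,A$ and $B' := \frac{1}{2i\pi}\,B$, one has $kA' + lB' = \frac{1}{2i\pi}(kA+lB)$, which is then diagonalizable with $\Sp(kA'+lB') \subset \Z$ for every $(k,l)\in \Z^2$. Proposition \ref{spectredansZ} then yields $A'B' = B'A'$, whence $AB = BA$.

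There is no real obstacle: the only point that deserves attention is the diagonalizability argument above, which is the standard observation that $\exp$ is injective when restricted to nilpotent matrices (as a polynomial map followed by a shift by $I_n$). Everything else is just a relabelling.
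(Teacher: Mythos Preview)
Your proposal is correct and follows essentially the same route as the paper: reduce to Proposition \ref{spectredansZ} by observing that $e^{M}=I_n$ forces $M$ to be diagonalizable with $\Sp(M)\subset 2i\pi\Z$, then rescale by $\tfrac{1}{2i\pi}$. The only difference is cosmetic: the paper cites \cite[Theorem 1.27]{Higham} for the characterization of solutions to $e^M=I_n$, whereas you supply the (correct) Jordan--Chevalley argument directly.
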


\begin{proof}
Recall that the solutions of the equation $e^M=I_n$ are the diagonalizable matrices $M$
such that $\Sp(M) \subset 2i\pi\Z$ (see \cite[Theorem 1.27]{Higham}).
In particular, for every $(k,l)\in \Z^2$, the matrix $kA+lB$ is diagonalizable and
$\Sp(kA+lB) \subset 2i\pi \Z$. Setting $A':=\frac{1}{2i\pi}\,A$ and $B':=\frac{1}{2i\pi}\,B$,
we deduce that $(A',B')$ satisfies the assumptions of Proposition \ref{spectredansZ}.
It follows that $A'B'=B'A'$, and hence $AB=BA$.
\end{proof}

\section{The case $\Sp(A) \subset 2i\pi\Z$ and $\Sp(B) \subset 2i\pi \Z$ in Theorem~\ref{2matrices}}\label{spectrelimite}

\begin{prop}\label{2eetape}
Let $(A,B)\in \Mat_n(\C)^2$ be such that $\forall (k,l)\in \Z^2, \; e^{kA+lB}=e^{kA}e^{lB}$,
$\Sp(A) \subset 2i\pi\Z$ and $\Sp(B) \subset 2i\pi \Z$. Then
$AB=BA$.
\end{prop}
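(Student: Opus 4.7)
The plan is to exploit the Jordan--Chevalley decomposition. Write $A=A_s+A_n$ and $B=B_s+B_n$, with $A_s,B_s$ diagonalizable, $A_n,B_n$ nilpotent, $A_sA_n=A_nA_s$, and $B_sB_n=B_nB_s$. The assumption $\Sp(A)\subset 2i\pi\Z$ forces $\Sp(A_s)\subset 2i\pi\Z$, so $e^{A_s}=I_n$ and $e^A=e^{A_n}$; likewise $e^B=e^{B_n}$. Applying the hypothesis at $(k,l)=(1,1)$ and $(-1,-1)$ gives $e^Ae^B=e^{A+B}=(e^{-A-B})^{-1}=(e^{-A}e^{-B})^{-1}=e^Be^A$, so $e^A$ and $e^B$ commute. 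Since they are unipotent, $A_n=\log(e^A)$ and $B_n=\log(e^B)$ are polynomials in $e^A$ and $e^B$ respectively, and therefore $A_nB_n=B_nA_n$.

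Next I transfer the hypothesis to a spectral statement. For $(k,l)\in\Z^2$,
\[
e^{kA+lB}=e^{kA}e^{lB}=e^{kA_n}e^{lB_n}=e^{kA_n+lB_n}
\]
is unipotent, so $\Sp(kA+lB)\subset 2i\pi\Z$. Writing the Jordan--Chevalley splitting $kA+lB=(kA+lB)_s+(kA+lB)_n$ yields $e^{(kA+lB)_s}=I_n$, hence $e^{(kA+lB)_n}=e^{kA_n+lB_n}$; since the exponential is injective on nilpotent matrices (its inverse is the log of a unipotent matrix), I obtain $(kA+lB)_n=kA_n+lB_n$ and therefore $(kA+lB)_s=kA_s+lB_s$. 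In particular $kA_s+lB_s$ is diagonalizable with spectrum in $2i\pi\Z$ for every $(k,l)\in\Z^2$, and Proposition~\ref{spectredansZ} applied to $\bigl(\tfrac{1}{2i\pi}A_s,\tfrac{1}{2i\pi}B_s\bigr)$ gives $A_sB_s=B_sA_s$. Moreover, the semisimple and nilpotent parts of $kA+lB$ commute, so expanding $(kA_s+lB_s)(kA_n+lB_n)=(kA_n+lB_n)(kA_s+lB_s)$ and using the already established commutations, the $kl$-coefficient yields $[A_s,B_n]=[A_n,B_s]$.

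The final step is a block-decomposition argument. Since $A_s$ and $B_s$ commute and are both diagonalizable, $\C^n=\bigoplus_\lambda V_\lambda$ decomposes into common eigenspaces $V_{(\alpha,\beta)}=\Ker(A_s-\alpha I_n)\cap\Ker(B_s-\beta I_n)$. Write $A_n=(A_{\lambda\mu})$ and $B_n=(B_{\lambda\mu})$ in block form with $A_{\lambda\mu},B_{\lambda\mu}\colon V_\mu\to V_\lambda$. The commutations $[A_s,A_n]=0$ and $[B_s,B_n]=0$ respectively force $A_{\lambda\mu}=0$ whenever $\alpha_\lambda\neq\alpha_\mu$ and $B_{\lambda\mu}=0$ whenever $\beta_\lambda\neq\beta_\mu$. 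A direct block computation gives $[A_s,B_n]_{\lambda\mu}=(\alpha_\lambda-\alpha_\mu)B_{\lambda\mu}$ and $[A_n,B_s]_{\lambda\mu}=(\beta_\lambda-\beta_\mu)A_{\lambda\mu}$. The identity $[A_s,B_n]=[A_n,B_s]$ then forces $A_{\lambda\mu}=B_{\lambda\mu}=0$ for every $\lambda\neq\mu$. Hence $A_n$ and $B_n$ each preserve every $V_\lambda$, on which $A_s$ and $B_s$ act as scalars; consequently $[A_s,B_n]=[A_n,B_s]=0$, and combining with $[A_s,B_s]=[A_n,B_n]=0$ gives $[A,B]=0$.

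The main obstacle is this last step: the single relation $[A_s,B_n]=[A_n,B_s]$ looks too weak on its own, and it is precisely its interplay with the $A_s$- and $B_s$-invariance of $A_n$ and $B_n$ in the joint eigenspace decomposition that pins down every off-diagonal block. Everything upstream is essentially bookkeeping with Jordan--Chevalley and the injectivity of $\exp$ on nilpotent matrices.
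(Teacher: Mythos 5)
Your proof is correct, and it follows the paper's skeleton up to the last step: the same Jordan--Chevalley decomposition, the same observation that $e^{A}=e^{A_n}$ and $e^{B}=e^{B_n}$, the same use of the logarithm of a unipotent matrix to get $[A_n,B_n]=0$, the same identification of the Jordan--Chevalley decomposition of $kA+lB$ as $(kA_s+lB_s)+(kA_n+lB_n)$, and the same appeal to Proposition~\ref{spectredansZ} (the paper routes this through Lemma~\ref{lemmecasnoyau}, which is just that proposition after rescaling) to get $[A_s,B_s]=0$. Where you genuinely diverge is the endgame. The paper exploits property L of the commuting pair $(A_s,B_s)$: it picks two distinct integers $a,b$ outside the finite exceptional set, notes that $A_s+aB_s$ and $A_s+bB_s$ are each polynomials in the other, and concludes that $A+aB$ commutes with $A+bB$, whence $[A,B]=0$. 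You instead extract the single relation $[A_s,B_n]+[B_s,A_n]=0$ from the commutation of the semisimple and nilpotent parts of $A+B$, and then kill every off-diagonal block of $A_n$ and $B_n$ in the joint eigenspace decomposition of $A_s$ and $B_s$ by a case analysis on whether $\alpha_\lambda\neq\alpha_\mu$ or $\beta_\lambda\neq\beta_\mu$ (at least one must hold when $\lambda\neq\mu$). Your finish is more hands-on but also more self-contained: it needs neither property L of $(A_s,B_s)$ nor the choice of generic integers, only the already-established commutators. (A harmless sign slip: $[A_n,B_s]_{\lambda\mu}=(\beta_\mu-\beta_\lambda)A_{\lambda\mu}$, not $(\beta_\lambda-\beta_\mu)A_{\lambda\mu}$; the vanishing argument is unaffected.) Both arguments are valid; yours trades the paper's slicker ``two generic points'' trick for explicit block bookkeeping.
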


\begin{proof}
We consider the Jordan-Chevalley decompositions $A=D+N$ and $B=D'+N'$, where $D$ and $D'$ are diagonalizable,
$N$ and $N'$ are nilpotent and $DN=ND$ and $D'N'=N'D'$.
Clearly, for every integer $k$, $kA=kD+kN$ (resp.\ $kB=kD'+kN'$) is the Jordan-Chevalley decomposition of $kA$ (resp.\ of $kB$),
and $\Sp(kD)=\Sp(kA)=k\,\Sp(A) \subset 2i\pi \Z$ (resp.\
$\Sp(kD')=\Sp(kB)=k\,\Sp(B) \subset 2i\pi \Z$). This shows that
$$e^{kA}=e^{kN} \quad \text{and} \quad e^{kB}=e^{kN'}.$$
Condition \eqref{dSPcond2} may be written as
$$\forall (k,l)\in \Z^2, \quad e^{kA+lB}=e^{kN}e^{lN'}=e^{lN'}e^{kN}.$$
Note in particular that $e^N$ and $e^{N'}$ commute. Since
$N$ is nilpotent, we have
$$N=\underset{k=1}{\overset{n-1}{\sum}}\frac{(-1)^{k+1}}{k}\bigl(e^N-I_n\bigr)^{k}.$$
That shows that
$N$ is a polynomial in $e^N$.
Similarly $N'$ is a polynomial in $e^{N'}$. Therefore,
$$NN'=N'N.$$
The above condition yields
$$\forall (k,l)\in \Z^2, \; e^{kA+lB}=e^{kN+lN'}.$$
For any $(k,l)\in \Z^2$, $kN+lN'$ is nilpotent since $N$ and $N'$ are commuting nilpotent matrices.
Hence $kN+lN'$ is a polynomial in $e^{kN+lN'}$. Since $kA+lB$ commutes with $e^{kA+lB}$, it commutes with $kN+lN'$.
Therefore
$$e^{kD+l D'}=e^{kA+lB}e^{-kN-lN'}=I_n.$$
In particular, this yields that $kD+lD'$ is diagonalizable with $\Sp(kD+lD') \subset 2i\pi \Z$,
and the Jordan-Chevalley decomposition of $kA+lB$ is
$kA+lB=(kD+lD')+(kN+lN')$ as $kN+lN'$ commutes with $kA+lB$.

\vskip 2mm
By Lemma \ref{lemmecasnoyau}, the matrices $D$ and $D'$ commute.
In particular $(D,D')$ has property L, which yields affine maps $f_1,\dots,f_n$ from $\C$ to $\C$
such that
$$\forall z \in \C, \; \OSp(D+zD')=\bigl[f_k(z)\bigr]_{1 \leq k \leq n}.$$
The set
$$E:=\bigl\{k \in \Z : \; \exists (i,j)\in \{1,\dots,n\}^2 : \; f_i \neq f_j \; \text{and}\; f_i(k)=f_j(k)\bigr\}$$
is clearly finite.
We may choose two distinct elements $a$ and $b$ in $\Z \setminus E$.
The following equivalence holds:
$$\forall (i,j)\in \{1,\dots,n\}^2, \;
f_i(a)=f_j(a) \Leftrightarrow f_i=f_j \Leftrightarrow f_i(b)=f_j(b).$$
Since $D$ and $D'$ are simultaneously diagonalizable, it easily follows that
$D+aD'$ is a polynomial in $D+bD'$ and conversely $D+bD'$ is a polynomial in $D+aD'$.
Hence $N+aN'$ and $N+bN'$ both commute with $D+aD'$ and $D+bD'$.
Since $N+aN'$ and $N+bN'$ both commute with one another,
we deduce that $A+aB=(D+aD')+(N+aN')$ commutes with $A+bB=(D+bD')+(N+bN')$.
Since $a \neq b$, we conclude that $AB=BA$.
\end{proof}

\section{Proofs of Theorem \ref{2matrices} and Proposition \ref{Bourgeoistheo}}

\begin{Def}\label{defsection4}
Let $(A,B)\in \Mat_n(\C)^2$.
\begin{enumerate}[i)]
\item $(A,B)$ is said to be decomposable if there exists a non-trivial decomposition
$\C^n=F \oplus G$ in which $F$ and $G$ are invariant linear subspaces for both $A$ and $B$.
\item In the sequel, we consider, for $k \in \N \setminus \{0\}$, the function
$$\gamma_k : (\lambda,\mu) \in \Sp(e^A) \times \Sp(e^B) \mapsto \lambda^k \mu \in \C.$$
\item For $\lambda \in \C$, we denote by $C_\lambda(M)$ the
characteristic subspace of $M$ with respect to $\lambda$, i.e.\ $C_\lambda(M)=\Ker(M-\lambda I_n)^n$.
\end{enumerate}
\end{Def}

\begin{lemme}\label{ultimlemma1}
Assume that $A$ satisfies Condition
\begin{equation}\label{condonA}
\forall (\lambda,\mu)\in \Sp(A)^2, \; \lambda-\mu \in 2i\pi\Q \Rightarrow
\lambda-\mu \in 2i\pi \Z.
\end{equation}
Then there exists $k \in \N \setminus \{0\}$ such that $\gamma_k$ is one-to-one.
\end{lemme}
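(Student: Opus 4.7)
The plan is to translate $\gamma_k$ being one-to-one into an arithmetic condition on eigenvalue representatives modulo $2i\pi\Z$, and then to use Condition \eqref{condonA} to boost the strong constraint $\notin 2i\pi\Z$ into the even stronger constraint $\notin 2i\pi\Q$, which is what makes only finitely many $k$ fail.

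First, I would pick representatives $\tilde{\alpha}_1,\dots,\tilde{\alpha}_r$ in $\Sp(A)$ of the equivalence classes of $\Sp(A)$ modulo $2i\pi\Z$, and similarly $\tilde{\beta}_1,\dots,\tilde{\beta}_s$ in $\Sp(B)$ for $\Sp(B)$ modulo $2i\pi\Z$. Then the $e^{\tilde{\alpha}_i}$'s exhaust $\Sp(e^A)$ without repetition, and likewise the $e^{\tilde{\beta}_j}$'s exhaust $\Sp(e^B)$. The equality
\[
\gamma_k\bigl(e^{\tilde{\alpha}_i},e^{\tilde{\beta}_j}\bigr)=\gamma_k\bigl(e^{\tilde{\alpha}_{i'}},e^{\tilde{\beta}_{j'}}\bigr)
\]
is equivalent to
\[
k(\tilde{\alpha}_i-\tilde{\alpha}_{i'})+(\tilde{\beta}_j-\tilde{\beta}_{j'})\in 2i\pi\Z,
\]
so $\gamma_k$ is one-to-one if and only if this fails for every $(i,j)\neq(i',j')$.

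Next, I would invoke \eqref{condonA}. For $i\neq i'$, the difference $\tilde{\alpha}_i-\tilde{\alpha}_{i'}$ is an element of $\Sp(A)-\Sp(A)$ that does not lie in $2i\pi\Z$ (by the choice of one representative per class), hence by \eqref{condonA} it does not lie in $2i\pi\Q$ either. This is the only place where the hypothesis on $A$ enters, and it is the crux of the argument.

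Finally, I would exhibit the required $k$ by a simple counting argument. If $i=i'$ and $j\neq j'$, the displayed containment reduces to $\tilde{\beta}_j-\tilde{\beta}_{j'}\in 2i\pi\Z$, which is false by construction of the $\tilde{\beta}_j$'s, so no $k$ is forbidden by such quadruples. If $i\neq i'$, then for a fixed quadruple $(i,i',j,j')$ at most one integer $k\geq 1$ can satisfy the displayed condition: two distinct solutions $k_1<k_2$ would yield $(k_2-k_1)(\tilde{\alpha}_i-\tilde{\alpha}_{i'})\in 2i\pi\Z$, hence $\tilde{\alpha}_i-\tilde{\alpha}_{i'}\in 2i\pi\Q$, contradicting the preceding paragraph. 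Only finitely many quadruples exist, so the set of forbidden $k$ is finite, and picking any $k\in\N\setminus\{0\}$ outside it gives the required injectivity. I do not expect any serious obstacle here, beyond the initial insight that \eqref{condonA} upgrades a $2i\pi\Z$ separation into a $2i\pi\Q$ separation, after which the rest is routine.
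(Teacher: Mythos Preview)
Your proof is correct and follows essentially the same approach as the paper: both arguments hinge on the observation that if two distinct positive integers $k_1,k_2$ yield a collision for the same pair of pairs, then subtracting gives $(k_2-k_1)(\alpha-\alpha')\in 2i\pi\Z$, which together with Condition~\eqref{condonA} forces $\alpha-\alpha'\in 2i\pi\Z$ and hence $\lambda=\lambda'$. The paper packages this as a pigeonhole-based proof by contradiction, while you phrase it directly as ``each quadruple forbids at most one $k$,'' but the underlying idea is identical.
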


\begin{proof}
Assume that for every $k \in \N \setminus \{0\}$, there are distinct pairs $(\lambda,\mu)$ and $(\lambda',\mu')$ in
$\Sp(e^A) \times \Sp(e^B)$ such that $\lambda^k \mu=(\lambda')^k\mu'$.
Since $\Sp(e^A) \times \Sp(e^B)$ is finite and $\N \setminus \{0\}$ is infinite, we may then find
distinct pairs $(\lambda,\mu)$ and $(\lambda',\mu')$ in
$\Sp(e^A) \times \Sp(e^B)$ and distinct non-zero integers $a$ and $b$ such that
$$\lambda^a \mu=(\lambda')^a\mu' \quad \text{and} \quad
\lambda^b \mu=(\lambda')^b \mu'.$$
All those eigenvalues are non-zero and $\bigl(\frac{\lambda}{\lambda'}\bigr)^{a-b}=1$ with $a \neq b$.
It follows that $\frac{\lambda}{\lambda'}$ is a root of unity.
However $\lambda=e^{\alpha}$ and $\lambda'=e^{\beta}$ for some $(\alpha,\beta)\in \Sp(A)^2$, which shows that $(a-b)(\alpha-\beta) \in 2i\pi \Z$. Condition \eqref{condonA} yields $\alpha-\beta \in 2i\pi \Z$; hence $\lambda=\lambda'$.
It follows that $\mu=\mu'$, in contradiction with $(\lambda,\mu) \neq (\lambda',\mu')$.
\end{proof}

\begin{lemme}\label{ultimlemma2}
Assume that $\gamma_1$ is one-to-one and that $(A,B)$ satisfies Equality
\eqref{dSPcond} (resp.\ Equality \eqref{Bourgeoiscond}). Then the characteristic subspaces of $e^A$ and $e^B$ are stabilized
by $A$ and $B$.
\end{lemme}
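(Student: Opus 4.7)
My plan starts from the observation that, under either hypothesis, $e^A e^B = e^B e^A = e^{A+B}$, so the simultaneous generalized eigenspace decomposition
$$\C^n = \bigoplus_{(\lambda,\mu)\in \Sp(e^A)\times \Sp(e^B)} V_{\lambda,\mu}, \qquad V_{\lambda,\mu}:=C_\lambda(e^A)\cap C_\mu(e^B)$$
is available, and moreover $C_\lambda(e^A)=\bigoplus_{\mu} V_{\lambda,\mu}$ and $C_\mu(e^B)=\bigoplus_{\lambda} V_{\lambda,\mu}$ since $e^A$ and $e^B$ preserve each other's characteristic subspaces. Thus it suffices to show that $A$ and $B$ stabilize each $V_{\lambda,\mu}$.

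For any $k \in \{1,2,3\}$ the restrictions of $e^A - \lambda I_n$ and of $e^B - \mu I_n$ to $V_{\lambda,\mu}$ are commuting nilpotents, so $(e^{kA}e^B)|_{V_{\lambda,\mu}} = \lambda^k\mu\,I + (\text{nilpotent})$; since $e^{kA+B} = e^{kA}e^B$ is granted by either hypothesis, I obtain $V_{\lambda,\mu} \subseteq W_k := C_{\lambda^k\mu}(e^{kA+B})$. Comparing the two direct sum decompositions of $\C^n$, namely the one by the $V_{\lambda',\mu'}$ and the characteristic decomposition of $e^{kA+B}$, a dimension count forces
$$W_k = \bigoplus_{\substack{(\lambda',\mu')\in \Sp(e^A)\times \Sp(e^B) \\ (\lambda')^k\mu' = \lambda^k\mu}} V_{\lambda',\mu'}.$$
The sole use of the hypothesis that $\gamma_1$ is one-to-one is now to collapse the $k=1$ case to $W_1 = V_{\lambda,\mu}$, which combined with the fact that $A+B$ commutes with $e^{A+B}$ yields $(A+B) V_{\lambda,\mu} \subseteq V_{\lambda,\mu}$.

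To isolate $A$, I apply the same reasoning with $k=2$ and $k=3$: the operator $kA+B$ commutes with $e^{kA+B}$, so it stabilizes $W_k$, whence $(kA+B) V_{\lambda,\mu} \subseteq W_k$; combined with $(A+B) V_{\lambda,\mu} \subseteq V_{\lambda,\mu} \subseteq W_k$, this gives $(k-1)A V_{\lambda,\mu} \subseteq W_k$, hence $A V_{\lambda,\mu} \subseteq W_k$. The two constraints $(\lambda')^2\mu'=\lambda^2\mu$ and $(\lambda')^3\mu'=\lambda^3\mu$ force $\lambda'=\lambda$ and then $\mu'=\mu$, so $W_2\cap W_3 = V_{\lambda,\mu}$. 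Therefore $A V_{\lambda,\mu} \subseteq V_{\lambda,\mu}$, and then $B V_{\lambda,\mu} = (A+B)V_{\lambda,\mu} - A V_{\lambda,\mu} \subseteq V_{\lambda,\mu}$, completing the argument.

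The subtlety I anticipate lies in keeping clear the two separate roles of the hypothesis: the injectivity of $\gamma_1$ serves only to identify $W_1$ with $V_{\lambda,\mu}$ (without which one cannot upgrade $(A+B)V_{\lambda,\mu}\subseteq W_1$ to $\subseteq V_{\lambda,\mu}$), while the higher-$k$ pieces need no injectivity of $\gamma_k$ and rely instead on the fact that a pair $(\lambda',\mu')$ satisfying $(\lambda')^k\mu'=\lambda^k\mu$ for two consecutive $k$ is necessarily $(\lambda,\mu)$. This makes the proof run verbatim under either \eqref{dSPcond} or \eqref{Bourgeoiscond}, since both supply the required identity $e^{kA+B}=e^{kA}e^B$ for $k\in\{1,2,3\}$.
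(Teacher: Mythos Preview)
Your argument is correct, but it takes a different (and longer) route than the paper's. Both proofs begin the same way: from $e^{A+B}=e^Ae^B=e^Be^A$ and the injectivity of $\gamma_1$ one identifies $C_{\lambda\mu}(e^{A+B})=V_{\lambda,\mu}:=C_\lambda(e^A)\cap C_\mu(e^B)$, and since $A+B$ commutes with $e^{A+B}$ one gets $(A+B)V_{\lambda,\mu}\subseteq V_{\lambda,\mu}$. At this point you invoke the identities $e^{kA+B}=e^{kA}e^B$ for $k=2,3$ to trap $AV_{\lambda,\mu}$ inside $W_2\cap W_3=V_{\lambda,\mu}$. The paper instead uses only the $k=1$ identity together with the elementary fact that a matrix commutes with its own exponential: since $B$ commutes with $e^B$, it stabilizes $C_\mu(e^B)=\bigoplus_\lambda V_{\lambda,\mu}$, hence so does $A=(A+B)-B$; symmetrically $A$ commutes with $e^A$, so $A$ (and hence $B$) stabilizes $C_\lambda(e^A)$. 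The paper's approach is more economical and shows that only the single relation $e^{A+B}=e^Ae^B=e^Be^A$ is needed here, not the higher-$k$ identities; your approach, on the other hand, avoids the ``symmetry'' step entirely and directly proves the finer statement that $A$ and $B$ stabilize each individual $V_{\lambda,\mu}$.
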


\begin{proof}
Notice that $A+B$ commutes with $e^{A+B}$, hence commutes with $e^Ae^B$.
It thus stabilizes the characteristic subspaces of $e^Ae^B$.
Let us show that
\begin{equation}\label{decomp}
\forall \mu\in \Sp(e^B), \; C_\mu(e^B)=\bigoplus_{\lambda \in \Sp(e^A)} C_{\lambda \mu}(e^Ae^B).
\end{equation}
$\bullet$ Since $e^B$ and $e^A$ commute, $e^A$ stabilizes the characteristic subspaces
of $e^B$. Considering the characteristic subspaces of the endomorphism of
$C_\mu(e^B)$ induced by $e^A$, we find
$$\forall \mu \in \Sp(e^B), \;
C_\mu(e^B)=\bigoplus_{\lambda \in \Sp(e^A)} \bigl[C_{\lambda}(e^A) \cap C_{\mu}(e^B)\bigr].$$
$\bullet$ Let $(\lambda,\mu)\in \Sp(e^A)\times \Sp(e^B)$.
Since $e^A$ and $e^B$ commute, they both stabilize $C_{\lambda}(e^A) \cap C_{\mu}(e^B)$
and induce simultaneously triangularizable endomorphisms of $C_{\lambda}(e^A) \cap C_{\mu}(e^B)$
each with a sole eigenvalue, respectively $\lambda$ and $\mu$: it follows that
$$C_{\lambda}(e^A) \cap C_{\mu}(e^B) \subset C_{\lambda \mu}(e^Ae^B).$$
$\bullet$ Finally, the application $(\lambda,\mu) \mapsto \lambda\mu$ is one-to-one on $\Sp(e^A)\times \Sp(e^B)$. Therefore
$$C_{\lambda \mu}(e^Ae^B) \cap
C_{\lambda' \mu'}(e^A e^B) =\{0\}$$
for all distinct pairs $(\lambda,\mu)$ and $(\lambda',\mu')$ in $\Sp(e^A) \times \Sp(e^B)$. \\
One has
$$\C^n=\bigoplus_{\mu \in \Sp(e^B)} C_\mu(e^B)=\bigoplus_{\mu \in \Sp(e^B)}\bigoplus_{\lambda \in \Sp(e^A)}\bigl[C_{\lambda}(e^A)
\cap C_\mu(e^B)\bigr]$$
and $\C^n$ is the sum of all the characteristic subspaces of $e^A e^B$. We deduce that
$$\forall (\lambda,\mu)\in \Sp(e^A)\times \Sp(e^B), \;
C_{\lambda \mu}(e^Ae^B)=C_{\lambda}(e^A) \cap C_\mu(e^B).$$
This gives Equality \eqref{decomp}.

\vskip 2mm
We deduce that $A+B$ stabilizes every characteristic subspace of $e^B$.
However this is also true of $B$ since it commutes with $e^B$.
Hence both $A$ and $B$ stabilize the characteristic subspaces of $e^B$.
Symmetrically, every characteristic subspace of $e^A$ is stabilized by both $A$ and $B$.
\end{proof}

\begin{proof}[Proof of Theorem \ref{2matrices} and Proposition \ref{Bourgeoistheo}]
We use an induction on $n$.
Both Theorem \ref{2matrices} and Proposition \ref{Bourgeoistheo}
obviously hold for $n=1$, so we fix $n \geq 2$ and assume that they hold for any
pair $(A,B)\in \Mat_k(\C)^2$ with $k \in \{1,\dots,n-1\}$.
Let $(A,B) \in \Mat_n(\C)^2$ satisfying Equality \eqref{dSPcond} (resp.\ Equality \eqref{Bourgeoiscond}).
Assume first that $(A,B)$ is decomposable. Then there exists $p \in \{1,\dots,n-1\}$, a non-singular matrix $P \in \GL_n(\C)$
and square matrices $A_1,B_1,A_2,B_2$ respectively in $\Mat_p(\C)$, in $\Mat_p(\C)$, in $\Mat_{n-p}(\C)$
and in $\Mat_{n-p}(\C)$ such that
$$A=P\begin{bmatrix}
A_1 & 0 \\
0 & A_2
\end{bmatrix}P^{-1} \quad \text{and} \quad
B=P\begin{bmatrix}
B_1 & 0 \\
0 & B_2
\end{bmatrix}P^{-1}.$$
Since the pair $(A,B)$ satisfies Equality \eqref{dSPcond} (resp.\ Equality \eqref{Bourgeoiscond}), it easily follows that this is also
the case of $(A_1,B_1)$ and $(A_2,B_2)$; hence the induction hypothesis yields that
$(A_1,B_1)$ and $(A_2,B_2)$ are commuting pairs (resp.\ have property L). Therefore $(A,B)$ is also
a commuting pair (resp.\ has property L).

\vskip 3mm
From that point on, we assume that $(A,B)$ is indecomposable.
We may also assume that $A$ satisfies Condition \eqref{condonA}.
Indeed, consider in general the finite set
$$\mathcal{E}:=\Q \cap \frac{1}{2i\pi}\,\bigl\{\lambda-\mu \mid (\lambda,\mu)\in \Sp(A)^2\bigr\}.$$
Since its elements are rational numbers, we may find some integer $p>0$
such that $p\mathcal{E}\subset \Z$. Replacing $A$ with $pA$, we notice that
$(pA,B)$ still satisfies Equality \eqref{dSPcond} (resp.\ Equality \eqref{Bourgeoiscond}) and that it is a commuting pair
(resp.\ satisfies property L) if and only if $(A,B)$ is a commuting pair (resp.\ satisfies property L).

\vskip 2mm
Assume now that $A$ satisfies Condition \eqref{condonA} as well as all the previous assumptions,
i.e.\ $(A,B)$ is indecomposable and satisfies Equality \eqref{dSPcond} (resp.\ Equality \eqref{Bourgeoiscond}).
By Lemma \ref{ultimlemma1}, we may choose $k \in \N \setminus \{0\}$ such that $\gamma_k$ is one-to-one.
Replacing $A$ with $kA$, we lose no generality assuming that $\gamma_1$ is one-to-one.

We can conclude: if $e^B$ has several eigenvalues, Lemma \ref{ultimlemma2} contradicts
the assumption that $(A,B)$ is indecomposable.
It follows that $e^B$ has a sole eigenvalue, and for the same reason this is also true of
$e^A$. Choosing $(\alpha,\beta)\in \C^2$ such that $\Sp(e^A)=\{e^\alpha\}$ and
$\Sp(e^B)=\{e^\beta\}$, we find that $\exp(A-\alpha\,I_n)$ and $\exp(B-\beta\,I_n)$
both have $1$ as sole eigenvalue. We deduce that $\Sp(A-\alpha\,I_n) \subset 2i\pi\Z$ and
$\Sp(B-\beta\,I_n) \subset 2i\pi\Z$.
Set $A':=A-\alpha\,I_n$ and $B':=B-\beta\,I_n$.
We now conclude the proofs of Theorem \ref{2matrices} and Proposition \ref{Bourgeoistheo}
by considering the two cases separately. \\
$\bullet$ \textbf{Case 1.} $(A,B)$ satisfies Equality \eqref{dSPcond}.
The pair $(A',B')$ clearly satisfies Equality \eqref{dSPcond}. Proposition \ref{2eetape}
yields that $A'$ commutes with $B'$; hence $AB=BA$. \\
$\bullet$ \textbf{Case 2.} $(A,B)$ satisfies Equality \eqref{Bourgeoiscond}.
The pair $(A',B')$ obviously satisfies Equality \eqref{Bourgeoiscond}. The matrices $e^{A'}$ and $e^{B'}$ commute and are therefore simultaneously triangularizable (see \cite[Theorem 1.1.5]{RadjRosen}).
Moreover, they have $1$ as sole eigenvalue.
Therefore $e^{kA'+B'}=(e^{A'})^ke^{B'}$ has $1$ as sole eigenvalue for every $k \in \N$.
Proposition \ref{BourgeoisimpliqueLrestr} shows that $\bigl(\frac{1}{2i\pi}A',\frac{1}{2i\pi}B'\bigr)$ has property L, which clearly entails that $(A,B)$ has property L.

\vskip 2mm
Thus Theorem \ref{2matrices} and Proposition \ref{Bourgeoistheo} are proven.
\end{proof}

\section{Additive semigroups on which the exponential is a homomorphism}

\begin{Not}
We denote by $\Q_+$ the set of non-negative rational numbers.
\end{Not}

\begin{Def}
A linear subspace $V$ of $\Mat_n(\C)$ has property L when there are $n$ linear forms
$f_1,\dots,f_n$ on $V$ such that
$$\forall M \in V, \; \OSp(M)=\bigl[f_k(M)\bigr]_{1 \leq k \leq n}.$$
\end{Def}

\noindent In this short section, we prove the following result.

\begin{theo}\label{semigroup}
Let $S$ be a subsemigroup of $(\Mat_n(\C),+)$ and assume that $M \mapsto \exp(M)$
is a homomorphism from $(S,+)$ to $(\GL_n(\C),\times)$.
Then $\Vect(S)$ has property L.
\end{theo}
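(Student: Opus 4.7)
The plan is to deduce Theorem \ref{semigroup} from Proposition \ref{Bourgeoistheo} combined with a Bertini-type irreducibility argument on the characteristic polynomial of the generic element of $V := \Span(S)$. As a first step, for any $A, B \in S$ the hypotheses yield $e^{kA+B} = e^{kA}e^B = e^B e^{kA}$ for every $k \in \N$ (with $k = 0$ being trivial, and $k \geq 1$ using that $kA$ and $kA + B$ both lie in $S$ and that $\exp$ is a homomorphism on $S$). Proposition \ref{Bourgeoistheo} then ensures that $(A, B)$ has property L.

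I would extract a basis $A_1, \ldots, A_m$ of $V$ inside $S$ --- possible since $V = \Span(S)$ --- and consider
\[
P(z_1, \ldots, z_m, X) \;:=\; \chi_{z_1 A_1 + \cdots + z_m A_m}(X) \;\in\; \C[z_1, \ldots, z_m, X],
\]
which is monic in $X$ of degree $n$ and homogeneous of total degree $n$ in $(z, X)$. Property L for $V$ amounts to the factorization $P(z, X) = \prod_{k=1}^n (X - L_k(z))$ with each $L_k$ a linear form in $z$. The crucial observation is that for every $(u, v) \in (\N^m \setminus \{0\})^2$, both $M_u := \sum_i u_i A_i$ and $M_v := \sum_i v_i A_i$ lie in $S$, so pair property L for $(M_u, M_v)$ implies that $P(u + tv, X) \in \C[t, X]$ splits into $n$ linear factors of the form $X - \alpha - t\beta$. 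I would then argue by contradiction: if $P$ admitted an irreducible factor $Q \in \C[z, X]$ with $\deg_X Q = d \geq 2$, then an application of Bertini's theorem to the irreducible hypersurface $\{Q = 0\} \subset \C^{m+1}$ would show that $Q(u + tv, X)$ stays irreducible in $\C[t, X]$ (of $X$-degree $d$) for $(u, v)$ in a non-empty Zariski-open subset of $\C^m \times \C^m$. Since $(\N^m \setminus \{0\})^2$ is Zariski-dense in $\C^m \times \C^m$, it meets this open set; at such an integer pair, $Q(u + tv, X)$ would be an irreducible factor of $P(u + tv, X)$ of $X$-degree $\geq 2$, contradicting that $P(u + tv, X)$ has only linear irreducible divisors in $\C[t, X]$. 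Hence every irreducible factor of $P$ has $X$-degree $1$, so $P(z, X) = \prod_k (X - L_k(z))$ with $L_k(z) \in \C[z]$; the homogeneity of $P$ in $(z, X)$ then forces each $L_k$ to be a homogeneous linear form in $z$, establishing property L for $V$.

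The main obstacle is the Bertini irreducibility step: one must verify that for an irreducible $Q \in \C[z, X]$ of $X$-degree $\geq 2$, the locus of $(u, v) \in \C^m \times \C^m$ for which $Q(u + tv, X)$ remains irreducible in $\C[t, X]$ is Zariski-open and non-empty. This is a standard consequence of iterated Bertini applied to $\{Q = 0\}$, cutting by successive generic affine hyperplanes in the $z$-coordinates until one is left with a $1$-dimensional curve; the dimension condition is met as long as $m \geq 2$, while the cases $m = 0$ and $m = 1$ are trivial since $V$ then has dimension at most one. Additional mild care is needed to ensure $\deg_X Q(u + tv, X) = d$ on the generic open set and to handle any polynomial content of $Q$ in $\C[z]$, but these are routine.
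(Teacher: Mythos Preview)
Your proposal is correct, and it shares the same first step as the paper: reduce to pairs via Proposition~\ref{Bourgeoistheo}, then lift pairwise property~L to $\Span(S)$. The difference lies entirely in that lifting step. The paper's argument is completely elementary: after fixing a basis $A_1,\dots,A_r\in S$ and listing the eigenvalues of each $A_j$, it uses property~L for the pairs $\bigl(\sum_{k<j}p_kA_k,\,A_j\bigr)$ to show that for every tuple $(p_1,\dots,p_r)$ of non-negative rationals there is some $(\sigma_1,\dots,\sigma_r)\in(\Sigma_n)^r$ expressing $\OSp\bigl(\sum p_jA_j\bigr)$ as $\bigl[\sum_j p_j\,a^{(j)}_{\sigma_j(k)}\bigr]_k$; then a pigeonhole argument (finitely many permutation lists, infinitely many rationals in $(0,1)$) together with the fact that the coefficients of $\chi_{\sum z_jA_j}$ are polynomials in the $z_j$ extends this from $\Q_+$ to $\C$, one coordinate at a time by downward induction.

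Your Bertini route is genuinely different and more geometric: you recast property~L as the splitting of $P(z,X)$ into factors linear in $X$, and derive a contradiction from an irreducible factor of $X$-degree $\geq 2$ by restricting to a generic line in $z$-space. What this buys is a single global picture in place of the paper's inductive combinatorics; what it costs is the Bertini machinery, which is substantially heavier than anything the paper uses. Your own identification of the ``main obstacle'' is accurate: the $2$-planes $\Pi_{u,v}$ are not fully generic in $\C^{m+1}$ since they always contain the $X$-direction, so one must check that the linear system of hyperplanes $\{c\cdot z=e\}$ is still adequate for iterated Bertini. It is, because $\deg_X Q\geq 1$ makes the projection $V(Q)\to\mathbb{A}^m_z$ dominant, so this linear system has no base points on $V(Q)$ and its image has full dimension $m$; you should also note that the generic section is reduced, so that irreducibility of the zero set upgrades to irreducibility of $Q(u+tv,X)$ as a polynomial. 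Both proofs are valid; the paper's has the virtue of being entirely self-contained.
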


\noindent By Proposition \ref{Bourgeoistheo}, it suffices to establish the following lemma.

\begin{lemme}
Let $S$ be a subsemigroup of $(\Mat_n(\C),+)$. Assume that every pair $(A,B)\in S^2$ has property L.
Then the linear subspace $\Vect(S)$ has property L.
\end{lemme}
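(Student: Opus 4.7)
My plan is to pass from the pairwise hypothesis on $S$ to property L for $V := \Vect(S)$ by combining a Zariski density argument with a factorization result for homogeneous polynomials.

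I would first observe that ``the pair $(A,B)$ has property L'' is a Zariski-closed condition on $\Mat_n(\C)^2$: it is equivalent to the characteristic polynomial $\chi_{xA+yB}(X) \in \C[x,y,X]$ (homogeneous of degree $n$ in three variables) splitting as a product of $n$ linear forms, and the set of such forms in $\C[x,y,X]_n$ is the image of the multiplication morphism $\operatorname{Sym}^n \mathbb{P}^2 \to \mathbb{P}(\C[x,y,X]_n)$, hence a closed subvariety; pulling back along the polynomial map $(A,B)\mapsto \chi_{xA+yB}$ gives the closedness. Next, $S$ is Zariski-dense in $V$: if $A_1,\ldots,A_d \in S$ form a basis of $V$, the positive-integer combinations $\sum n_i A_i$ lie in $S$ by additive closure and form a Zariski-dense subset of $V$. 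Combining these two facts with the hypothesis yields that every pair in $V^2$ has property L.

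I would then reformulate property L for $V$ algebraically. Using the basis, write $\chi(c;X) := \chi_{\sum c_i A_i}(X) \in \C[c_1,\ldots,c_d,X]$, homogeneous of total degree $n$ and monic of degree $n$ in $X$. Property L for $V$ amounts to the factorization $\chi(c;X) = \prod_{k=1}^{n}(X - L_k(c))$ with $L_k$ linear forms in $c$, whereas property L for every pair in $V^2$ means that for every $a,b \in \C^d$ the restriction $\chi(xa+yb;X) \in \C[x,y,X]$ splits into linear forms. I would factor $\chi = \prod_i P_i^{m_i}$ into irreducible factors in $\C[c,X]$; by unique factorization in $\C[x,y,X]$, each $P_i(xa+yb;X)$ divides the splitting of $\chi(xa+yb;X)$ and is therefore itself a product of linear forms for every pair $(a,b)$. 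If some $P_i$ were irreducible of degree $\geq 2$, Bertini's theorem applied to the linear system of projective $2$-planes in $\mathbb{P}^d$ through the point $p := [0{:}\cdots{:}0{:}1]$ (which misses $V(P_i)$ since the leading $X$-coefficient of $P_i$ is a nonzero constant) would produce a generic such $2$-plane on which $P_i$ restricts to an irreducible plane curve of degree $\geq 2$, contradicting the splitting. Hence each $P_i$ is linear; the monicity of $\chi$ in $X$ then allows normalizing each $P_i$ as $X - L_k(c)$, which gives property L for $V$.

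The main obstacle is this final step, which is the only place where a genuinely global (rather than line-by-line) ingredient is required. An alternative route would bypass Bertini by a monodromy argument: property L on every pair forces trivial monodromy of the eigenvalue labelling along every line in $V \setminus \Delta$ (with $\Delta$ the discriminant locus), hence trivial monodromy globally since $\pi_1(V \setminus \Delta)$ is generated by small loops each lying in some $2$-plane; the resulting single-valued algebraic functions on $V \setminus \Delta$ extend continuously to $V$, are rational hence polynomial by continuity, and finally are linear by the homogeneity $\lambda_k(tM) = t\lambda_k(M)$.
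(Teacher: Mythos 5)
Your argument is correct in outline but takes a genuinely different route from the paper's, which is entirely elementary. Your first half is a clean substitute for most of the paper's work: noting that the splitting of $\det(XI_n-xA-yB)$ into linear forms is a Zariski-closed condition on $(A,B)$, and that $S^2$ is Zariski-dense in $\Vect(S)^2$ because $S$ contains all positive-integer combinations of a basis, you upgrade the hypothesis to \emph{every} pair of $\Vect(S)^2$ in one stroke; the paper instead reaches only the nonnegative-rational combinations by hand (chaining pairwise property L along partial sums) and then converts the coefficients to complex numbers one coordinate at a time via a pigeonhole-plus-polynomial-identity downward induction $\calP(l)$. Your endgame is also different: where the paper's induction glues the line-by-line linearity into a single global labelling combinatorially, you factor $\chi_{\sum c_iA_i}(X)$ into irreducibles and exclude a factor $P_i$ of degree $\geq 2$ by Bertini applied to plane sections through $p=[0:\cdots:0:1]$. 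This buys a more conceptual and strictly more general statement (any linear subspace all of whose pairs have property L itself has property L), at the cost of nontrivial machinery. Two points need care in a full write-up: (i) you must use the integral (irreducible \emph{and} reduced) characteristic-zero form of Bertini for iterated hyperplane sections through the base point $p\notin V(P_i)$, since irreducibility of the zero set alone would still allow $P_i|_\Pi$ to be a power of a single linear form; the iteration also requires dimension at least $2$ at each step, so the cases $d\leq 2$ should be dispatched directly (they are immediate); (ii) the monodromy alternative you sketch requires a Zariski--Lefschetz surjectivity statement for planes through the origin, which lies on the discriminant, so it is less routine than the Bertini route.
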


\begin{proof}
Let $(A_1,\dots,A_r)$ be a basis of $\Vect(S)$ formed of elements of $S$.
For every $j \in \{1,\dots,r\}$, we choose a list $(a_1^{(j)},\dots,a_n^{(j)})\in \C^n$ such that
$$\OSp(A_j)=\bigl[a_k^{(j)}\bigr]_{1 \leq k \leq n}.$$
Since, for every $(p_1,\dots,p_r)\in \N^r$, the pair $\Bigl(\underset{k=1}{\overset{j-1}{\sum}}p_k A_k,A_j\Bigr)$
has property L for every $j \in \{2,\dots,r\}$, by induction we obtain a list
$(\sigma_1,\dots,\sigma_r)\in (\Sigma_n)^r$ such that
$$\OSp\biggl(\sum_{j=1}^r p_j A_j\biggr)=\biggl[\sum_{j=1}^r p_j\, a^{(j)}_{\sigma_j(k)}\biggr]_{1 \leq k \leq n}.$$
Multiplying by inverses of positive integers, we readily generalize this as follows:
for every $(z_1,\dots,z_r)\in (\Q_+)^r$, there
exists a list
$(\sigma_1,\dots,\sigma_r)\in (\Sigma_n)^r$ such that
$$\OSp\biggl(\sum_{j=1}^r z_j A_j\biggr)=\biggl[\sum_{j=1}^r z_j\, a^{(j)}_{\sigma_j(k)}\biggr]_{1 \leq k \leq n}.$$
Now, we prove the following property, depending on $l \in \{0,\dots,r\}$, by downward induction:
\begin{center}
$\calP(l)$ : For every $(z_1,\dots,z_l) \in (\Q_+)^l$,
there exists a list $(\sigma_1,\dots,\sigma_r) \in (\Sigma_n)^r$ satisfying
$$\forall (z_{l+1},\dots,z_r)\in \C^{r-l}, \;
\OSp\biggl(\sum_{j=1}^r z_j A_j\biggr)=\biggl[\sum_{j=1}^r z_j\, a^{(j)}_{\sigma_j(k)}\biggr]_{1 \leq k \leq n}.$$
\end{center}
In particular, $\calP(r)$ is precisely what we have just proven, whilst
$\calP(0)$ means that there exists a list
$(\sigma_1,\dots,\sigma_r)\in (\Sigma_n)^r$ such that, for every $(z_1,\dots,z_r)\in \C^r$,
$$\OSp\biggl(\sum_{j=1}^r z_j A_j\biggr)=\biggl[\sum_{j=1}^r z_j\, a^{(j)}_{\sigma_j(k)}\biggr]_{1 \leq k \leq n}.$$
Therefore $\calP(0)$ implies that $\Vect(S)$ has property $L$.

Let $l \in \{1,\dots,r\}$ be such that $\calP(l)$ holds, and
fix $(z_1,\dots,z_{l-1}) \in (\Q_+)^{l-1}$. By $\calP(l)$, for every $z_l \in \Q_+$, we may choose a list
$(\sigma_1^{z_l},\dots,\sigma_r^{z_l}) \in (\Sigma_n)^r$ such that
$$\forall (z_{l+1},\dots,z_r)\in \C^{r-l}, \;
\OSp\biggl(\sum_{j=1}^r z_j A_j\biggr)=\biggl[\sum_{j=1}^r z_j\, a^{(j)}_{\sigma_j^{z_l}(k)}\biggr]_{1 \leq k \leq n}.$$
Since $(\Sigma_n)^r$ is finite and $\Q_+ \cap (0,1)$ is infinite, some list
$(\sigma_1,\dots,\sigma_r) \in (\Sigma_n)^r$ equals $(\sigma_1^{z_l},\dots,\sigma_r^{z_l})$
for infinitely many values of $z_l$ in $\Q_+ \cap (0,1)$.
Fixing $(z_{l+1},\dots,z_r) \in \C^{r-l}$, we deduce the identity
$$\forall z_l \in \C, \quad
\chi_{\underset{j=1}{\overset{r}{\sum}} z_j A_j}(X)=\prod_{k=1}^n\biggl(X-\sum_{j=1}^r z_j\, a^{(j)}_{\sigma_j(k)}\biggr)$$
by remarking that, on both sides, the coefficients of the polynomials are polynomials in $z_l$.
Hence
$$\forall (z_l,\dots,z_r)\in \C^{r-l+1}, \;
\OSp\biggl(\sum_{j=1}^r z_j A_j\biggr)=\biggl[\sum_{j=1}^r z_j\, a^{(j)}_{\sigma_j(k)}\biggr]_{1 \leq k \leq n}.$$
This proves that $\calP(l-1)$ holds.
\end{proof}

\section*{Acknowledgements}

The author would like to thank the referee for helping enhance the quality of this article
in a very significant way.

\end{document}